\newcommand{\bbN}{{\mathbb N}}
\newcommand{\bbQ}{{\mathbb Q}}
\newcommand{\bbR}{{\mathbb R}}
\newcommand{\bbZ}{{\mathbb Z}}
\newcommand{\bbC}{{\mathbb C}}
\newcommand{\bbH}{\mathbb{H}}
\newcommand{\calW}{\mathcal{W}}
\newcommand{\calG}{\mathcal{G}}
\newcommand{\bs}{\backslash}
\newcommand{\id}{\operatorname{id}}
\newcommand{\im}{\operatorname{im}}
\newcommand{\isom}{\operatorname{Isom}}
\newcommand{\pr}{\operatorname{pr}}
\newcommand{\supp}{\operatorname{supp}}
\newcommand{\conv}{\operatorname{conv}}
\newcommand{\diam}{\operatorname{diam}}
\newcommand{\Hom}{\operatorname{hom}^b}
\newcommand{\rmL}{{L}}
\newcommand{\abs}[1]{{\left\lvert #1\right\rvert}}
\newcommand{\norm}[1]{{\left\lVert #1\right\rVert}}
\newcommand{\rmH}{H}
\newcommand{\rmHb}{H_b}
\newcommand{\rmHl}{H^{(1)}}
\newcommand{\rmC}{C}
\newcommand{\rmCb}{C_{b}}
\newtheorem{theorem}{Theorem}[section]
\newtheorem{lemma}[theorem]{Lemma}
\newtheorem{corollary}[theorem]{Corollary}
\theoremstyle{definition}
\newtheorem{definition}[theorem]{Definition}
\newtheorem{example}[theorem]{Example}
\newtheorem{question}[theorem]{Question}
\newtheorem{remark}[theorem]{Remark}
\numberwithin{equation}{section}
\begin{document}

\title[Efficient subdivision in hyperbolic groups]{Efficient subdivision in hyperbolic groups and applications}

\author{Uri Bader}
\address{Technion, Haifa}
\email{uri.bader@gmail.com}

\author{Alex Furman}
\address{University of Illinois at Chicago, Chicago}
\email{furman@math.uic.edu}

\author{Roman Sauer}
\address{WWU M\"unster, M\"unster}
\email{sauerr@uni-muenster.de}

\thanks{U.B. and A.F. were supported in part by the BSF grant 2008267.}
\thanks{U.B was supported in part by the ISF grant 704/08.}
\thanks{A.F. was supported in part by the NSF grants DMS 0604611 and 0905977.}
\thanks{R.S. gratefully acknowledges support from the \emph{Deutsche Forschungsgemeinschaft},
made through grant SA 1661/1-2, during the initial phase of this project.}

\subjclass[2000]{Primary 20F67; Secondary 55N99}
\keywords{Hyperbolic groups, measure equivalence, simplicial volume}

\maketitle

\begin{abstract}
	We identify the images of the comparision maps from ordinary homology and
	Sobolev homology, respectively, to the $\ell^1$-homology of a word-hyperbolic
	group with coefficients in complete normed modules. The underlying idea
	is that there is a subdivision procedure
	for singular chains in negatively curved spaces that is much more
	efficient (in terms of the $\ell^1$-norm) than
	barycentric subdivision. 	
	The results of this paper
	are an important ingredient in
	a forthcoming proof of the authors that
	hyperbolic lattices in dimension $\ge 3$ are rigid with respect to integrable
	measure
	equivalence. Moreover, we prove
	a proportionality principle for the simplicial volume of negatively curved
	manifolds with regard to integrable measure equivalence.
\end{abstract}

\section{Introduction and Statement of the Main Results}
\label{sec:introduction}

Bounded cohomology of (discrete or continuous) groups proved to
be a useful tool for various questions about rigidity of groups. Since bounded
cohomology is, in general, extremely hard to compute, the question of surjectivity or
bijectivity of the comparision map from bounded cohomology to ordinary cohomology
is very important. It is conjectured to be surjective
(and might even by an isomorphism, for all we know) for simple connected
Lie groups with finite center~\cite{conj} and trivial coefficients.
Furthermore, it is surjective for (discrete) word-hyperbolic groups and arbitrary
Banach modules as coefficients~\cite{mineyev}.

In this paper we are concerned with a kind of pre-dual situation:
the comparision map from the
ordinary homology of a discrete group into its $\ell^1$-homology. We prove
in Theorem~\ref{hyp-sob} that for
word-hyperbolic groups the image of the comparision map in the $\ell^1$-homology
coincides with the image of a similar comparision map from
\emph{Sobolev homology} (Definition~\ref{def:various homologies})
to the $\ell^1$-homology.

The Sobolev chain complex $C_\ast^{(1,1)}(G)$
of a group $G$
can be viewed as a subcomplex
of the $\ell^1$-chain complex $C_\ast^{(1)}(G)$, containing the
ordinary chain complex $C_\ast(G)$, but being much larger than $C_\ast(G)$.
The Sobolev homology of a group $G$ with coefficients in the Banach space
$\rmL^1(X)$, where $X$ is a $G$-probability space, is a natural recipient of
certain maps associated to measure equivalence cocycles $G\times X\to G$ satisfying an
$\ell^1$-condition. This is reminiscient of the work of Monod and
Shalom~\cite{monod+shalom}, where bounded cohomology is used as a recipient
of certain maps associated to measure equivalence
cocycles, and where they prove
rigidity results regarding measure equivalence of products of word-hyperbolic groups.
In a forthcoming paper~\cite{mostow+me} we prove
that hyperbolic lattices are rigid with respect to integrable measure equivalence
using the main result of this paper, Theorem~\ref{hyp-sob}; in the present paper
we give an application to the simplicial volume: in
Theorem~\ref{thm:invariance of simplicial volume} we
prove a proportionality principle with regard to integrable measure equivalence
for fundamental groups of closed negatively curved manifolds, which generalizes
Gromov's proportionality principle~\cite{gromov}*{section~0.4} for such manifolds.

\subsection{Norms on the standard resolution and Sobolev homology} 
\label{sub:norms_on_the_standard_resolution_and_sobolev_homology}

Let $X$ be a set. We consider the
chain complex $C_\ast(X)$ where $C_n(X)$ is the free abelian group with
basis $X^{n+1}$ and differentials defined by
\[ d_n(x_0,x_1,\ldots,x_n)=\sum_{i=0}^n (-1)^i(x_0,\ldots,\widehat{x_i},\ldots,x_n). \]
If $X=G$ is a group, then $C_\ast(G)$ is called
the \emph{standard homogeneous resolution} of $G$.
Endowing each $C_n(G)$ with the diagonal $G$-operation, $C_\ast(G)$ becomes a
chain complex of $\bbZ G$-modules. Let $C_\ast(G,\bbR)=C_\ast(G)\otimes_\bbZ\bbR$
be the
corresponding complex with real coefficients.

There is a variety of norms one might impose on $C_\ast(G)$; we consider the
following:

\begin{definition}\label{def:norms}
Let $G$ be finitely generated. Fix a word metric on $G$. For a subset
$S\subset G$ we denote by $\diam(S)$ the diameter with respect to this word metric.
On $C_n(G)$ and $C_n(G,\bbR)$ we define
\begin{enumerate}
\item the \emph{$\ell^1$-norm} $$\|\sum a_{(g_0,g_1,\ldots,g_n)}\cdot (g_0,g_1,\ldots,g_n)\|_1= \sum |a_{(g_0,g_1,\ldots,g_n)}|,$$
\item and the \emph{Sobolev norm}
\begin{align*}
\|\sum a_{(g_0,g_1,\ldots,g_n)}&\cdot (g_0,g_1,\ldots,g_n)\|_{1,1}\\&= \sum |a_{(g_0,g_1,\ldots,g_n)}|\cdot\bigl(1+\diam(g_0,g_1,\ldots,g_n)\bigr).
\end{align*}
\end{enumerate}
\end{definition}

It is easy to verify that different word metrics on $G$ give rise to equivalent
Sobolev norms. We denote $C_n(G)$ when endowed with the $\ell^1$-norm or Sobolev norm
by $C_n^{(1,1)}(G)$ and $C_n^{(1)}(G)$ respectively. Note that
both are automatically complete because of the integral coefficients.
The differentials above are continuous with
respect to both the $\ell^1$-norm and the Sobolev norm. In particular,
we obtain chain complexes of normed modules
$C_\ast^{(1,1)}(G)$ and $C_\ast^{(1)}(G)$.

In section~\ref{sec:normed rings} we explain the less common setting of
normed rings and normed modules over normed rings. The integral group
ring $\bbZ G$ endowed with the $\ell^1$-norm is a normed ring in the sense
of definition~\ref{def:normed ring}. The chain complexes $C_\ast^{(1,1)}(G)$ and
$C_\ast^{(1)}(G)$ are normed chain complexes over the normed ring $\bbZ G$. In
subsection~\ref{sub:tensor products} we explain the construction of a completed
tensor product $\widehat\otimes_{\bbZ G}$ over the normed ring $\bbZ G$ endowed with
the $\ell^1$-norm, which is an integral version of the projective
tensor product of Banach spaces (compare Remark~\ref{rem:recover projective tensor product}). It will be essential to the proof of
Theorem~\ref{thm:invariance of simplicial volume} to use  $\widehat\otimes_{\bbZ G}$
rather than the usual projective tensor product.

\begin{definition}\label{def:various homologies}
	Let $E$ be a normed module $E$ over the normed ring $\bbZ G$.
	The $n$-th \emph{Sobolev homology} $H^{(1,1)}_n(G,E)$ is the $n$-th homology of
	$C_\ast^{(1,1)}(G)\widehat{\otimes}_{\bbZ G} E$. The $n$-th $\ell^1$-homology
	$H_n^{(1)}(G,E)$
	is the $n$-th homology of $C_\ast^{(1)}(G)\widehat{\otimes}_{\bbZ G} E$.
\end{definition}

\begin{definition}\label{def:comparision map}
	Let $E$ be a normed module $E$ over the normed ring $\bbZ G$.
	The homomorphisms
	$H_\ast(G,E)\to H_\ast^{(1)}(G,E)$ and
	$H_\ast^{(1,1)}(G,E)\to H_\ast^{(1)}(G,E)$ induced by the
	natural chain maps
	$C_\ast(G)\otimes_{\bbZ G} E\to C_\ast^{(1)}(G)\widehat\otimes_{\bbZ G} E$
	(compare Lemma~\ref{lem:universal property tensor}) and
	$C_\ast^{(1,1)}(G)\widehat\otimes_{\bbZ G}E
	\to C_\ast^{(1)}(G)\widehat\otimes_{\bbZ G} E$ (compare
	Example~\ref{exa:inclusion maps}), respectively, are
	called \emph{comparision maps}.
\end{definition}

\begin{remark}
	The Sobolev homology (or rather its dual) is reminiscient of the
	so-called \emph{group cohomology with polynomial growth}, which was
	studied by Connes and Moscovici in the context of Novikov
	conjecture~\cite{connes}.
\end{remark}

\begin{remark}\label{rem:semi-norms on homology}
	If $E_\ast$ is a chain complex of normed modules over a normed ring, then
	its homology groups $H_n(E_\ast)$ inherit a semi-norm by defining the semi-norm
	of a homology class $x$ as the infimum of the norms of chains representing $x$.
	
	If $E$ is
	a Banach space with isometric $G$-action, then $H_n^{(1)}(G,E)$ is just the usual
	$\ell^1$-homology endowed with the usual semi-norm (see also
	Remark~\ref{rem:recover projective tensor product}).
\end{remark}

\subsection{Main results} 
\label{sub:main_result}

Our main theorem is:

\begin{theorem} \label{hyp-sob}
Let $G$ be a hyperbolic group. Let $E$ be a complete normed $\bbZ G$-module.
Then the following images under the comparision maps coincide:
\[
	\im\Bigl(H_\ast(G, E)\to H_\ast^{(1)}(G,E)\Bigr)=
	\im\Bigl(H_\ast^{(1,1)}(G,E)\to H_\ast^{(1)}(G,E)\Bigr).
\]
\end{theorem}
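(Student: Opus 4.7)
The easy inclusion $\im(H_\ast(G,E)\to H_\ast^{(1)}(G,E))\subseteq \im(H_\ast^{(1,1)}(G,E)\to H_\ast^{(1)}(G,E))$ follows at once from the factorization of the comparison chain map $C_\ast(G)\otimes_{\bbZ G} E \to C_\ast^{(1)}(G)\widehat\otimes_{\bbZ G} E$ through $C_\ast^{(1,1)}(G)\widehat\otimes_{\bbZ G} E$, since $C_\ast(G)$ embeds as a subcomplex of $C_\ast^{(1,1)}(G)$. The content of the theorem is the reverse inclusion.

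The plan is to construct, $G$-equivariantly on the chain level, a chain map $\sigma\colon C_\ast^{(1,1)}(G)\to C_\ast^{(1)}(G)$ together with a chain homotopy $T\colon C_\ast^{(1,1)}(G)\to C_{\ast+1}^{(1)}(G)$ relating the natural inclusion $\iota$ to $\sigma$, that is, $\partial T+T\partial=\iota-\sigma$, with two key properties: the image $\sigma(\tau)$ of a single simplex is supported on simplices of some uniformly bounded diameter $D_n$; and both $\sigma$ and $T$ satisfy the linear bound $\|\sigma(\tau)\|_1+\|T(\tau)\|_1\le C_n\cdot\bigl(1+\diam(\tau)\bigr)$ for every simplex $\tau=(g_0,\ldots,g_n)$. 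This linear bound, together with the definition of the Sobolev norm, ensures that after tensoring over $\bbZ G$ with the complete normed module $E$, both $\sigma$ and $T$ extend continuously to $\bbZ G$-linear maps with target $C_\ast^{(1)}(G)\widehat\otimes_{\bbZ G} E$, and the chain homotopy identity persists in the completion.

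Given such $\sigma$ and $T$, the theorem follows quickly. For any Sobolev cycle $z\in C_n^{(1,1)}(G)\widehat\otimes_{\bbZ G} E$ the identity yields $[\iota(z)]=[\sigma(z)]$ in $H_n^{(1)}(G,E)$. The key observation is that the diagonal $G$-action on simplices of diameter at most $D_n$ has only finitely many orbits---fixing the first coordinate to be the identity forces the remaining coordinates into a fixed finite ball in $G$. Using $\bbZ G$-equivariance to transport each bounded-diameter summand of $\sigma(z)$ onto a fixed set of orbit representatives $\tau_1,\ldots,\tau_N$, one obtains $\sigma(z)=\sum_{j=1}^{N}\tau_j\widehat\otimes e_j$ for some $e_j\in E$. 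Such a finite sum is manifestly in the image of the natural map $C_n(G)\otimes_{\bbZ G} E\to C_n^{(1)}(G)\widehat\otimes_{\bbZ G} E$, proving that $[\iota(z)]$ lies in the image of $H_n(G,E)$.

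The main obstacle is the construction of $\sigma$ and $T$ with the \emph{linear} diameter-bound, and this is where hyperbolicity is essential. Fix a $G$-equivariant (quasi-)geodesic bicombing of $G$ (for instance, Mineyev's). For a $1$-simplex $(g_0,g_1)$ of diameter $d$, slice along the preferred geodesic: with samples $g_0=h_0,h_1,\ldots,h_k=g_1$ ($k\le d$), set $\sigma(g_0,g_1)=\sum_{i=0}^{k-1}(h_i,h_{i+1})$ and $T(g_0,g_1)=\sum_{i=0}^{k-1}(g_0,h_i,h_{i+1})$, both of $\ell^1$-norm linear in $d$. Higher-dimensional simplices are subdivided recursively and compatibly along their edges. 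The classical barycentric subdivision is insufficient: iterating it $O(\log d)$ times to reach bounded diameter produces a chain whose $\ell^1$-norm is only polynomial in $d$, not linear. The required linear bound relies on $\delta$-hyperbolicity in an essential way: a finite tuple in $G$ is approximated by a tree up to bounded additive error (Gromov's tree-approximation lemma), and the tree-like structure of a far-spread simplex allows one to fill it along its geodesic skeleton by \emph{linearly} many bounded pieces, rather than the polynomially many pieces needed in Euclidean geometry. The $G$-equivariance of the entire construction is inherited from that of the chosen bicombing.
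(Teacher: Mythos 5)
Your proposal is correct and follows essentially the same route as the paper: the map $\sigma$ is the paper's efficient subdivision $f_\ast$ of Theorem~\ref{bounded-chian-map} (built via the tree approximation of Theorem~\ref{thm:rough isometry to a tree}), your homotopy $T$ is supplied in the paper by Lemma~\ref{lem:basic homotopy}, and your finite-orbit observation is exactly Lemma~\ref{lem:tensor with complete module} (finitely generated freeness of the Rips complex, using completeness of $E$). The only cosmetic difference is that the paper routes the bounded-diameter chain through an abstract comparison map $g_\ast\colon C^r_\ast(G)\to C_\ast(G)$ obtained from acyclicity of the Rips complex, while you identify the image with ordinary chains directly; both rest on the same finiteness argument.
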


The above theorem follows rather easily (see section~\ref{sec:proof of main thm})
from the following theorem. Here $C_\ast^r(G)$ denotes the \emph{Rips complex}
of $G$, which is a subcomplex of $C_\ast(G)$
(see Definition~\ref{def: rips complex}).

\begin{theorem}\label{bounded-chian-map}
	Let $G$ be a $\delta$-hyperbolic group.
	There exist a $\bbZ G$-chain homomorphism
	$f_\ast\colon C_\ast(G)\to C_\ast(G)$ and
	constants $r(n)=r(n,\delta)>0$ for every $n\ge 0$ such that:
	\begin{enumerate}
		\item $f_0=\id$,
		\item $\im(f_i)\subset C_i^{r(i)}(G)$ for $i\ge 0$, and
	    \item $f_i$ is bounded with respect to the Sobolev norm on the domain and
	the \mbox{$\ell^1$-norm} on the target for $i\ge 0$.
	\end{enumerate}
\end{theorem}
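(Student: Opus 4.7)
The plan is to construct $f_\ast$ by induction on $n$. Since the tuples $(1,g_1,\dots,g_n)$ form a free $\bbZ G$-basis of $C_n(G)$ under the diagonal action, an equivariant chain map is determined by its values on such tuples, and it suffices to produce, for each $(g_1,\dots,g_n)$, a chain $f_n(1,g_1,\dots,g_n)\in C_n^{r(n)}(G)$ with $\partial f_n(1,g_1,\dots,g_n) = f_{n-1}\bigl(\partial(1,g_1,\dots,g_n)\bigr)$ and with $\ell^1$-norm at most $K_n\bigl(1+\diam(1,g_1,\dots,g_n)\bigr)$. The base case $f_0=\id$ is immediate.

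The whole problem therefore reduces to an \emph{efficient filling lemma}: there exist constants $r'=r'(n,\delta)$ and $K=K(n,\delta)$ such that, for every $p\in G$ and every cycle $c\in C_{n-1}^{r}(G,\bbR)$ whose support lies in the ball of radius $D$ about $p$, one can find a chain $b\in C_n^{r'}(G,\bbR)$ with $\partial b=c$ and $\|b\|_1\le K\|c\|_1(1+D)$. Applied to $c=f_{n-1}\bigl(\partial(1,g_1,\dots,g_n)\bigr)$ with $p=e$---a cycle which by the inductive hypothesis has $\ell^1$-norm $O(1+\diam)$ and is supported in a ball of radius $O(\diam)$ about $e$---this lemma yields the desired $f_n$.

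To construct the filling I would fix an equivariant (quasi-)geodesic, or better a Mineyev-style homological bicombing, assigning to each pair $(p,h)\in G\times G$ a $1$-chain tracking a geodesic from $p$ to $h$. For each simplex $\sigma=(h_0,\dots,h_{n-1})$ appearing in $c$, choose subdivision points $p=p_0,p_1,\dots,p_k=h_0$ along the geodesic from $p$ to $h_0$ at spacing at most $r$, so that $k=O(D/r)$. The $\delta$-thinness of hyperbolic triangles then lets one triangulate the ``prism'' with base $\sigma$ and apex traversing $p_0,\dots,p_k$ using only a bounded number (depending on $n$ and $\delta$) of bounded-diameter $n$-simplices per unit of length. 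Summing over the simplices of $c$ gives the linear-in-$D$ bound on $\|b\|_1$, and equivariance of the bicombing transfers to equivariance of $f_\ast$.

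The main obstacle is setting up the higher-dimensional prism fillings equivariantly with uniform constants. In dimension one, efficient filling amounts to subdividing a geodesic, which is elementary. In higher dimensions one must cone a small-diameter cycle to a distant basepoint, and here $\delta$-thinness of triangles alone does not suffice: one needs a coherent hierarchy of thin prism fillings that are simultaneously $G$-equivariant and compatible with the bicombing. Arranging this so that both the Rips radii $r(i)$ and the coefficients $K_n$ of the linear estimate depend only on $n$ and $\delta$ is the technical heart of the argument, and it is this geometric input---absent from the usual barycentric subdivision---that makes the Sobolev-to-$\ell^1$ bound possible.
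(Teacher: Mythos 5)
Your inductive skeleton matches the paper's --- reduce to the free $\bbZ G$-basis $(e,g_1,\dots,g_n)$ and fill the cycle $f_{n-1}(d\sigma)$ by a chain of controlled $\ell^1$-norm and controlled simplex diameter --- but the filling lemma you formulate is quantitatively too weak, and the prism construction you sketch cannot deliver more than that weak bound. If $\|b\|_1\le K\|c\|_1(1+D)$ and you apply this to $c=f_{n-1}(d\sigma)$, which by induction has $\|c\|_1=O(1+\diam(\sigma))$ and is supported at distance $D=O(\diam(\sigma))$ from $e$, you get $\|f_n(\sigma)\|_1=O\bigl((1+\diam(\sigma))^2\bigr)$, i.e.\ boundedness from a quadratically weighted norm rather than from the Sobolev norm $1+\diam$. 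This is not a bookkeeping slip: coning each of the $O(D)$ simplices of $c$ to a basepoint through a subdivided geodesic of length $O(D)$ genuinely produces $O(D^2)$ simplices, because the lateral ladders between the geodesics from $p$ to adjacent vertices of $c$ do not cancel in a general $\delta$-hyperbolic space. You correctly flag the coherent hierarchy of thin prism fillings as the technical heart, but that is precisely what is missing, and $\delta$-thinness plus a bicombing does not supply it as stated. (A Mineyev-style bicombing makes $h_0(v)-h_0(u)$ have bounded $\ell^1$-norm, but its support still stretches along the whole geodesic from the basepoint, so the coned simplices violate the Rips-radius condition (2); it also has non-integral coefficients, while the theorem is about the integral complex $C_\ast(G)$.)

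The paper's resolution is a different geometric input. It maintains the stronger inductive support condition that $f_i(\sigma)$ lives on the union $[\sigma]$ of chosen geodesics between the vertices of $\sigma$ (not merely in a ball), invokes the tree-approximation theorem of Coornaert--Delzant--Papadopoulos to produce a $c(\delta,n)$-rough isometry from $[\sigma]$ onto a metric simplicial tree, and performs the filling in the tree. On a tree the chain contraction $h^T_i$ of the Rips complex has operator norm bounded by a constant $e(r,i)$ independent of the diameter for every $i\ge 1$, because geodesics from a common basepoint to two nearby points telescope: $h_0^Td(u,v)$ is just the short path from $u$ to $v$, so each small simplex is coned within its own convex hull. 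Pulling back along the rough isometry and correcting by a prism homotopy of bounded norm converts the naive quadratic count into the required linear one. To complete your argument you would need either to import this tree approximation or to find a substitute cancellation mechanism; as written, the proposal establishes only a weaker, non-Sobolev estimate.
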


\begin{remark}
The statement in the preceding theorem is actually true for some constant
$r=r(i)$ that does not depend on $i$ (only on the group $G$),
thus giving a chain map $f_*:C_*(G) \to C_*^r(G)$ which can be seen to be a homotopy equivalence.
We refrain from a proof of this statement since it
is more complicated, and the weaker statement in Theorem~\ref{bounded-chian-map}
is sufficient to conclude
our main result, Theorem~\ref{hyp-sob}, and its corollaries presented below.
\end{remark}

\begin{remark}
The map $f_i$ is a kind of subdivision map that maps arbitrarily large simplices
in $C_i(G)$ to a sum of simplices of bounded diameter (bounded by $r(i)$). For
$f_i$ to be continuous the number of simplices in this sum must grow at
most linearly in the diameter of the large simplex you start with.
That such an efficient subdivision is possible on trees is quite easy to see. We
approximate simplices in $C_i(G)$ by trees (see
Theorem~\ref{thm:rough isometry to a tree}) to
reduce to this case.
\end{remark}

Next we present an application of Theorem~\ref{hyp-sob} to the simplicial volume
of aspherical manifolds, which will be proved
in Section~\ref{sec:_ell_1_measure_equivalence_and_simplicial_volume}.
Recall that a topological space
is \emph{aspherical} if its universal
cover is contractible. Two aspherical CW-complexes are homotopy equivalent
if and only if their fundamental groups are isomorphic.
The
\emph{simplicial volume} $\norm{M}\in\bbR^{\ge 0}$ of an
$n$-dimensional closed orientable manifold $M$
is defined as the infimum of the $\ell^1$-norms
of real singular chains chains representing the fundamental class in $H_n(M,\bbR)$.
The simplicial volume
has many applications in geometry; see
the groundbreaking paper of Gromov~\cite{gromov} for much more information.

The definitions
of \emph{(integrable) measure equivalence} and \emph{(integrable) ME-coupling}
will be recalled in Subsection~\ref{sub:_ell_1_measure_equivalence}.
Measure equivalence is an equivalence relation between groups,
introduced by Gromov in \cite{gromov-invariants} as a measure-theoretic counter part
to quasi-isometry between finitely generated groups; it is intimately related
to orbit equivalence in ergodic theory, to the theory of von Neumann algebras, and to questions in descriptive set theory. We will not go further into a discussion of
this notion here, but refer the reader to the surveys~\cite{shalom-survey, Popa:ICM, Furman:MGT}.

\begin{theorem}\label{thm:invariance of simplicial volume}
	Let $M$ and $N$ be closed, aspherical, orientable manifolds.
	Assume that there
	is an ergodic, integrable ME-coupling $(\Omega,\mu)$ of the fundamental groups
	${G}=\pi_1(M)$ and ${H}=\pi_1(N)$ with coupling
	index $c_\Omega=\mu({H}\bs\Omega)/\mu({G}\bs\Omega)$. Then:
	\begin{enumerate}
		\item If $\norm{N}>0$ and ${G}$ is word-hyperbolic, then $\dim(N)\le \dim(M)$.
		\item Assume that ${G}$ and ${H}$ are
		word-hyperbolic and that $\norm{M}>0$ and $\norm{N}>0$. Then:
		\[\norm{M}=c_\Omega\cdot \norm{N}\text{ and }\dim(M)=\dim(N).\]
	\end{enumerate}
\end{theorem}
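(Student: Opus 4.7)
The plan is to use the integrable ME-cocycle to produce a Sobolev-level transfer chain map between the parameterized chain complexes of $G$ and $H$, identify the simplicial volumes with $\ell^1$-semi-norms of parameterized fundamental classes, and then invoke Theorem~\ref{hyp-sob} to pass between Sobolev and ordinary homology inside $\ell^1$-homology. The completed tensor product $\widehat\otimes_{\bbZ G}$ of Section~\ref{sub:tensor products}, rather than the ordinary projective tensor product, is essential: its multiplicative behavior on elementary tensors is what makes the parameterized fundamental class carry the expected $\ell^1$-semi-norm, equal to the simplicial volume times the measure of a fundamental domain.

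For $M$ closed aspherical orientable of dimension $n$, the fundamental class lies in $H_n(G,\bbR)\cong H_n(M,\bbR)$ with $\ell^1$-semi-norm $\|M\|$ in $H_n^{(1)}(G,\bbR)$. Let $X\subset\Omega$ be a $G$-fundamental domain (an $H$-probability space via the cocycle) and $Y$ an $H$-fundamental domain; then $[M]\otimes\mathbf{1}_X\in H_n^{(1)}(G,L^1(X))$ has $\ell^1$-semi-norm $\mu(X)\|M\|$, and analogously $[N]\otimes\mathbf{1}_Y$ has semi-norm $\mu(Y)\|N\|$. Using the integrable ME-cocycle $\alpha\colon H\times Y\to G$, I would define by integration against $\alpha$ a chain map
\[
\tau_*\colon C_*^{(1,1)}(H)\widehat\otimes_{\bbZ H}L^1(Y)\to C_*^{(1,1)}(G)\widehat\otimes_{\bbZ G}L^1(X).
\]
Integrability of the cocycle, i.e.\ $\int_Y|\alpha(h,y)|_G\,d\mu(y)<\infty$ for each $h\in H$, is exactly the boundedness of $\tau_*$ in the Sobolev norm: the $1+\diam$ weight absorbs the integrated word length. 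A symmetric construction yields $\sigma_*$ in the reverse direction, and a standard computation shows $\sigma_*\tau_*$ and $\tau_*\sigma_*$ act as multiplication by $c_\Omega^{\pm 1}$ on Sobolev homology.

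For part~(2), with both $G$ and $H$ hyperbolic, Theorem~\ref{hyp-sob} identifies the images of Sobolev and ordinary homology in $\ell^1$-homology on both sides. Hence $[M]\otimes\mathbf{1}_X$ and $[N]\otimes\mathbf{1}_Y$, lying in the ordinary-homology image, admit Sobolev representatives to which the transfers apply. Composing $\tau_*$ with the norm-one inclusion $C_*^{(1,1)}\hookrightarrow C_*^{(1)}$ gives a bounded Sobolev-to-$\ell^1$ chain map; combining the resulting semi-norm inequalities in both directions with $\sigma_*\tau_*=c_\Omega\cdot\mathrm{id}$ at the Sobolev level pins down $\|M\|=c_\Omega\|N\|$. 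The equality of dimensions then follows from part~(1): otherwise one of $\|M\|,\|N\|$ would have to vanish, contradicting the hypothesis.

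For part~(1), only $G$ is hyperbolic. If $n=\dim N>\dim M$, then $\mathrm{cd}_\bbZ(G)\le\dim M<n$ gives $H_n(G,L^1(X))=0$, and by Theorem~\ref{hyp-sob} the image of $H_n^{(1,1)}(G,L^1(X))$ in $H_n^{(1)}(G,L^1(X))$ vanishes. Hence the transferred Sobolev class $\tau_*([N]\otimes\mathbf{1}_Y)$ has zero $\ell^1$-image on the $G$-side; tracing this vanishing back to the $H$-side via $\sigma_*$ and the composition identity contradicts the positive semi-norm $\mu(Y)\|N\|>0$. The main obstacle is precisely this last step: since $\sigma_*$ is bounded only at the Sobolev level and not on arbitrary $\ell^1$-chains, the propagation of $\ell^1$-vanishing from $G$ back to $H$ requires carefully interleaving Sobolev and $\ell^1$ representatives, using Theorem~\ref{hyp-sob} on the $G$-side to produce an ordinary-homology witness for the null class and then exploiting its vanishing by cohomological dimension to transport the vanishing backwards.
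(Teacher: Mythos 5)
Your skeleton (cocycle transfer, Sobolev regularity from integrability, Theorem~\ref{hyp-sob} to pass back to ordinary homology, cohomological dimension for part~(1)) matches the paper's, but three of your load-bearing steps do not hold as stated. First, the transfer is not a bounded map $C_*^{(1,1)}(H)\widehat\otimes_{\bbZ H}L^1(Y)\to C_*^{(1,1)}(G)\widehat\otimes_{\bbZ G}L^1(X)$: integrability gives $\int_Y l(\alpha(h,y))\,d\mu(y)<\infty$, which controls the transfer of chains that are \emph{essentially bounded} in the $Y$-variable and supported on finitely many tuples, but for a general $L^1$-valued Sobolev chain the weight $\diam(\bar\alpha(\bar h^{-1},y)^{-1})$ cannot be absorbed into $1+\diam(\bar h)$ pointwise. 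This is exactly why the paper's Lemma~\ref{lem:factorization over sobolev} only asserts that the image of \emph{ordinary integral} chains lands in the Sobolev subcomplex (the estimate there ends with a factor $\operatorname{ess-sup}(f)$), rather than asserting Sobolev-to-Sobolev boundedness. Second, your mechanism for pinning down the constant fails: even granting $\sigma_*\tau_*=c_\Omega\cdot\id$ on homology, the operator norms of $\tau_*,\sigma_*$ in the Sobolev norm involve the integrability constants of the cocycle, not $c_\Omega$, and the Sobolev norm of the representative supplied by Theorem~\ref{hyp-sob} is completely uncontrolled; so the chain of semi-norm inequalities does not close up to give $\norm{M}=c_\Omega\norm{N}$. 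The paper instead gets an \emph{isometric} isomorphism on $\ell^1$-homology (Theorem~\ref{thm:induction in homology}) by dualizing Monod--Shalom's isometric induction in bounded cohomology and invoking L\"oh's translation principle; no Sobolev norm enters the isometry statement.

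Third, and most seriously, you are missing the step that identifies the transferred class. Knowing that $H_n^{(1)}(\alpha)$ applied to the class of $[N]$ lies in the image of ordinary homology $H_n(G,L^1(X_H,\bbZ))$ does not by itself compare it to the class of $[M]$. The paper uses Poincar\'e duality for $G$ together with ergodicity, via $H_n(G,L^1(X_H,\bbZ))\cong H^0(G,L^1(X_H,\bbZ))\cong L^1(X_H,\bbZ)^G\cong\bbZ$, to conclude that the transferred class equals $e\cdot c_\Omega$ times the image of $[M]$ for a nonzero \emph{integer} $e$; the inequality $\abs{e}\ge 1$ combined with the symmetric inequality is what forces $e=\pm1$ and the exact proportionality. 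This is where ergodicity and the integral coefficients $L^1(X_H,\bbZ)$ (hence the whole normed-$\bbZ G$-module formalism) are actually used, and your proposal never invokes either. Finally, for part~(1) the nonvanishing of the transferred class should come directly from the injectivity of the forward composition (inclusion of constants and $H_n^{(1)}(\alpha)$ are isometric, and $H_n(c_H)([N])$ has positive semi-norm), rather than from the back-transfer via $\sigma_*$ that you yourself flag as problematic.
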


Since closed, orientable, negatively curved manifolds have
positive simplicial volume~\cite{gromov}*{0.3~Thurston's theorem} and
word-hyperbolic fundamental groups,
we obtain the following corollary.

\begin{corollary}\label{cor:invariance of simplicial volume for negative curvature}
	Let $M$ and $N$ be closed, orientable, negatively curved manifolds.
	Assume that there
	is an ergodic, integrable ME-coupling $(\Omega,\mu)$ of the fundamental groups
	${G}=\pi_1(M)$ and ${H}=\pi_1(N)$ with coupling
	index $c_\Omega=\mu({H}\bs\Omega)/\mu({G}\bs\Omega)$. Then
	$\norm{M}=c_\Omega\cdot \norm{N}$ and $\dim(M)=\dim(N)$. Further, if ${H}\cong{G}$,
	then $c_\Omega=1$.
\end{corollary}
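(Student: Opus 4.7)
The plan is to reduce the corollary to Theorem~\ref{thm:invariance of simplicial volume}(2) by verifying that its hypotheses are automatic in the negatively curved setting, and then to use the homotopy invariance of simplicial volume to pin down $c_\Omega$ when $H\cong G$. First I would check that both $G$ and $H$ are word-hyperbolic: a closed negatively curved Riemannian manifold has universal cover diffeomorphic to $\bbR^n$ by Cartan--Hadamard (so it is aspherical), and its fundamental group acts properly cocompactly by isometries on a Gromov-hyperbolic space, which is one of the standard characterizations of word-hyperbolicity. Next, I would invoke Thurston's theorem, recalled in \cite{gromov}*{0.3~Thurston's theorem}, to conclude $\norm{M}>0$ and $\norm{N}>0$.

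With these two observations in hand, Theorem~\ref{thm:invariance of simplicial volume}(2) applies to the given ergodic, integrable ME-coupling $(\Omega,\mu)$ and yields at once the equalities $\norm{M}=c_\Omega\cdot\norm{N}$ and $\dim(M)=\dim(N)$, establishing the first part of the corollary.

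For the final assertion, assume $H\cong G$. Since $M$ and $N$ are both aspherical with isomorphic fundamental groups, any group isomorphism $\pi_1(M)\to\pi_1(N)$ is realized by a homotopy equivalence $M\simeq N$ (the standard property of $K(\pi,1)$-spaces, via obstruction theory). Simplicial volume is a homotopy invariant, so $\norm{M}=\norm{N}$. Combining this with $\norm{M}=c_\Omega\cdot\norm{N}$ and $\norm{N}>0$ forces $c_\Omega=1$.

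There is no real obstacle here: the corollary is essentially an unpacking of Theorem~\ref{thm:invariance of simplicial volume}(2) in light of two classical facts (asphericity plus hyperbolicity of the fundamental group via Cartan--Hadamard, and positivity of simplicial volume via Thurston). The only point where one must be slightly careful is the final step, but this is handled cleanly by the homotopy invariance of $\norm{\cdot}$.
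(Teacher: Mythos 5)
Your proposal is correct and follows essentially the same route as the paper, which derives the corollary from Theorem~\ref{thm:invariance of simplicial volume}(2) by observing that closed, orientable, negatively curved manifolds are aspherical with word-hyperbolic fundamental groups and have positive simplicial volume by Thurston's theorem. The final step (deducing $c_\Omega=1$ from $H\cong G$ via the homotopy invariance of the simplicial volume for aspherical manifolds) is exactly the intended argument, which the paper leaves implicit.
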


\begin{remark}\label{rem:ergodicity}
	Any ME-coupling $(\Omega,\mu)$ has a decomposition~\cite{furman}*{Lemma~2.2} into
	ergodic ME-couplings $(\Omega,\mu_z)$. If $(\Omega,\mu)$ is integrable,
	then almost every
	$(\Omega,\mu_z)$ is integrable (see also~\cite{mostow+me}). Hence the
	the equality of dimensions in the previous corollary also holds without the
	ergodicity assumption.
\end{remark}

\begin{remark}\label{rem:recovering proportionality principle}
	Let $M$ and $N$ be closed, orientable, negatively curved manifolds with
	isometric universal covers. Denote their universal cover by $W$. Then the isometry
	group of $W$ contains both $\pi_1(M)$ and $\pi_1(N)$ as cocompact lattices.
	In particular, the isometry group of $W$ endowed with the Haar measure is an
	integrable measure coupling. Up to the ergodicity assumption (which actually
	can be ignored here
	due to the Howe-Moore theorem and various vanishing results for simplicial volume),
	the previous corollary generalizes
	Gromov's proportionality principle~\cite{gromov}*{section~0.4} in that
	situation.
\end{remark}

A positive answer to the following question would be a even more
far-reaching generalization
of the proportionality principle of the simplicial volume with strong consequences for
the measure equivalence rigidity of hyperbolic lattices (see~\cite{mostow+me});
a positive answer would
also fit well with the proportionality of $L^2$-Betti numbers with regard to
measure equivalence~\cite{gaboriau} and a conjectural bound of $L^2$-Betti numbers by the
simplicial volume~\cite{lueck}*{Conjecture~14.1 on p.~489}.

\begin{question}
	Let $M$ and $N$ be closed, orientable, aspherical manifolds. Assume that $\pi_1(M)$ and
	$\pi_1(N)$ are measure equivalent with index $c>0$. Does this imply that
	$\norm{M}=c\cdot \norm{N}$? Further, if both simplicial volumes are positive, are the
	dimensions of $M$ and $N$ equal?
\end{question}

\subsection{Some global conventions} 
\label{sub:some_conventions}

We use the terms \emph{hyperbolic group} and \emph{word-hyperbolic group}
interchangeably. We also use the terms \emph{integrable ME-coupling} and
\emph{$\ell^1$-ME-coupling} interchangeably.
A \emph{$\delta$-hyperbolic group} is understood in
the sense of~\cite{bridson+haefliger}*{Definition~1.1 on p.~399} using the
slim triangles condition.

We denote the metric on a metric space generically by $d$; we also denote the differential
in a chain complex generically by $d$, if it is clear without ambiguity.

\section{Normed rings and modules}\label{sec:normed rings}

We transfer several concepts from topological vector spaces to the setting of
\mbox{$R$-mo}dules, where $R$ is an arbitrary normed ring (for example, $R=\bbZ$).
Most of this section is straightforward but we review basic notions like, e.g.,
completions and tensor products
because it is not very common to consider normed modules over
$\bbZ$ or $\bbZ G$.

Let $R$ be a unital ring, and let $1_R$ denote its unit element.
We follow the usual convention and denote the element $n\cdot 1_R$ by $n\in R$.

\begin{definition}\label{def:normed ring}
	Let $|\cdot|_{\bbZ}$ denote the usual absolute value on $\bbZ$.
	A \emph{normed ring} $R$ is a unital ring $R$
	endowed with a real-valued function
	$x\mapsto |x|_R$ (called \emph{norm}) such
	that for all $x,y\in R$ and every $n\in \bbZ$:
	\begin{enumerate}
	\item $|x|_R=0\Leftrightarrow x=0$;
	\item $|x+y|_R\leq |x|_R+|y|_R$;
	\item $|xy|_R \leq |x|_R|y|_R$;
	\item $|nx|_R=|n|_{\bbZ}|x|_R$.
	\end{enumerate}
\end{definition}

\begin{definition}\label{def:normed module}
	A \emph{normed (left) $R$-module} over a normed ring
	$R$ is a (left) \mbox{$R$-mo}\-dule $M$ endowed with a real-valued \emph{norm function}
	$m\mapsto \|m\|_M$ such that for all $u,v\in M$, $r\in R$, and $n\in \bbZ$:
	\begin{enumerate}
		\item $\|u\|_M=0\Leftrightarrow u=0$;
		\item $\|u+v\|_M\leq \|u\|_M+\|v\|_M$;
		\item $\|ru\|_M \leq |r|_R\|u\|_M$;
		\item $\|nu\|_M=|n|_{\bbZ}\|u\|_M$;
	\end{enumerate}
	A normed right $R$-module is defined similarly.
\end{definition}

If $\norm{\cdot}$ on $M$ satisfies (2)-(4),
but not necessarily (1), we call
$M$ a \emph{semi-normed module}.
Whenever $M$ or $R$ are clear from the context,
we denote the norms on $M$ or $R$ simply by $\norm{\cdot}$ or $\abs{\cdot}$, respectively.
Observe that a normed module is necessarily torsion-free as an abelian group.

A \emph{normed complex} over $R$ is just a chain complex in the category of normed
$R$-modules. A \emph{bounded $R$-chain map} is a chain map between
normed complexes consisting of bounded $R$-homomorphisms in each degree.

\begin{example}\label{exa:inclusion maps}
	Let $\bbZ G$ be endowed with the $\ell^1$-norm. Then $\bbZ G$ is a normed ring.
    The $\bbZ G$-modules $C_k(G)$ are normed modules when endowed with either the $\ell^1$ or the Sobolev norms
    (Definition~\ref{def:norms}).
	To emphasize the normed module structure on these modules we will denote them by $C_k^{(1)}(G)$ and $C_k^{(1,1)}(G)$, respectively.
    The boundary maps $d_k$ are bounded with respect to both norms (by $k+1$), thus both complexes
    $C_*^{(1)}(G)$ and $C_*^{(1,1)}(G)$ are normed $\bbZ G$-complexes.
    The inclusion $C_*^{(1,1)}(G) \hookrightarrow C_*^{(1)}(G)$ is a bounded chain map of norm at most~$1$.
\end{example}

\subsection{Dual spaces and completions}

Let $R$ be a normed ring.
A homomorphism $\phi:M\to N$ between
two normed $R$-modules $M,N$ is continuous with respect to the topologies induced by the
norm of $M$ and $N$, respectively, if and only if it is bounded, that is, there is
$c\geq 0$ such that $\norm{\phi(m)}\le c\norm{m}$; the infimum of such constants $c$ is the
\emph{operator norm} $\norm{\phi}$.
In that case we say that $\phi$ is a \emph{bounded $R$-homomorphism}.

Let $\Hom_R(M,N)$ be the abelian group of bounded $R$-homomorphisms from $M$ to $N$.
Equipped with the operator norm it becomes a
normed $\bbZ$-module.
Every normed $R$-module has an underlying normed $\bbZ$-module. So we can define
its dual
\[M'=\Hom_\bbZ(M,\bbR).\]
If $M$ is a normed left $R$-module, then $M'$ is naturally a normed right $R$-module, and the double dual $M''$ is again a normed left $R$-module.
In fact $M'$ and $M''$ are real vector spaces (and modules over $\bbR \otimes_{\bbZ} R$).

Given a Cauchy sequence $(f_i)$ in $M'$, one verifies that
$f(m)=\lim_{i\to \infty} f_i(m)$ defines a bounded $f\in M'$. Hence we obtain:

\begin{lemma}\label{lem:dual is complete}
	$M'$ is complete.
\end{lemma}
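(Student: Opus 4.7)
The plan is to mimic the classical proof that the dual of a normed vector space is a Banach space, adapted to the setting of normed $\bbZ$-modules. The only property of the coefficient ring $\bbR$ that will really be used is its completeness, and the only property of $M$ that matters is that it carries a norm making the evaluation maps $f \mapsto f(m)$ Lipschitz for each $m$.

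First I would start with a Cauchy sequence $(f_i)$ in $M'$ (with respect to the operator norm) and produce a candidate limit $f$ pointwise. For every $m \in M$ the estimate
\[
|f_i(m) - f_j(m)| \le \|f_i - f_j\|\cdot \|m\|_M
\]
shows that $(f_i(m))$ is Cauchy in $\bbR$, so by completeness of $\bbR$ the formula $f(m) \defq \lim_{i\to\infty} f_i(m)$ defines a function $f\colon M \to \bbR$. Passing to the limit in the identities $f_i(u+v) = f_i(u)+f_i(v)$ and $f_i(nu)=n\,f_i(u)$ (for $n\in\bbZ$) shows that $f$ is a $\bbZ$-module homomorphism.

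Next I would verify boundedness and the operator-norm convergence. Any Cauchy sequence is bounded, so there is a constant $C$ with $\|f_i\| \le C$ for all $i$; taking $i \to \infty$ in $|f_i(m)| \le C\|m\|_M$ gives $|f(m)| \le C\|m\|_M$, hence $f\in M'$. For convergence, fix $\varepsilon > 0$ and choose $N$ with $\|f_i - f_j\| < \varepsilon$ for all $i,j\ge N$. For any $m\in M$ with $\|m\|_M \le 1$ and any $i\ge N$,
\[
|f_i(m) - f(m)| = \lim_{j\to\infty} |f_i(m) - f_j(m)| \le \varepsilon,
\]
and taking the supremum over such $m$ yields $\|f_i - f\| \le \varepsilon$ for $i \ge N$. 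Thus $f_i \to f$ in $M'$, proving completeness.

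There is no real obstacle here; the argument is entirely routine and is the same as for Banach space duals. The only point worth flagging is that $M$ itself need not be complete, but this is irrelevant because $f$ is built by pointwise limits into $\bbR$, and passage to the norm-closure of $M$ plays no role.
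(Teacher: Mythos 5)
Your proof is correct and is exactly the argument the paper has in mind: the paper's ``proof'' is the single sentence preceding the lemma, asserting that the pointwise limit $f(m)=\lim_i f_i(m)$ of a Cauchy sequence defines a bounded element of $M'=\Hom_\bbZ(M,\bbR)$, which is precisely what you verify in detail. Nothing further is needed.
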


There is a natural map $M\to M''$, given, as usual, by an evaluation.
The following is a version of the Hahn-Banach theorem that applies for
normed modules.

\begin{lemma}[Hahn-Banach for normed modules]\label{lem:hahn-banach-lemma}
Let $R$ be a normed ring, and $M,N$ be semi-normed $R$-modules.
\begin{enumerate}
\item
For an injective bounded $R$-homomorphism $N\hookrightarrow M$,
the induced dual map $N'\to M'$ is surjective.
\item
Every $m\in M$ has a supporting functional, that is,
\[ \forall m\in M~\exists f\in M' \text{ such that } \|f\|=1\text{ and }\|f(m)\|=\|m\|.\]
\item
The canonical bounded $R$-homomorphism $M\to M''$
given by evaluation is isometric. If $M$ is normed, it is also injective.
\end{enumerate}
\end{lemma}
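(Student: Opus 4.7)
Part (1) is a Hahn-Banach-type extension theorem, and the $R$-module structure plays no role because the dual is taken over $\bbZ$; I would read the statement as asserting that the restriction map $M' \to N'$ is surjective, i.e., every bounded $\bbZ$-linear functional on $N$ extends to one on $M$ with the same norm bound. My plan is the standard Zorn's lemma reduction to the inductive step of extending a bounded $\phi \in N'$ from $N$ to $N + \bbZ v$ for a single $v \in M \setminus N$.

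This inductive step splits into two cases. If $n_0 v = w_0 \in N$ for some smallest $n_0 > 0$, then the value $\Phi(v) = \phi(w_0)/n_0$ is forced, and boundedness follows from the scaling axiom $\|n_0 x\| = n_0 \|x\|$: multiplying the desired inequality $|\Phi(mv + w)| \le \|mv + w\|$ through by $n_0$ reduces it to $|\phi(m w_0 + n_0 w)| \le \|m w_0 + n_0 w\|$, which is $\phi$'s own bound. In the torsion-free case — where no positive multiple of $v$ lies in $N$ — I would select $\alpha = \Phi(v) \in \bbR$ from the interval of values satisfying $|m\alpha + \phi(w)| \le \|mv + w\|$ for all $m \in \bbZ$ and $w \in N$, and verify nonemptiness of this interval using the identity $mw + m'w' = m'(mv + w') - m(m'v - w)$ together with the triangle inequality, mimicking the classical real-vector-space argument.

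For (2), I would apply (1) to the cyclic subgroup $\bbZ m \subset M$ together with the functional $km \mapsto k\|m\|$; this is well-defined since torsion elements of semi-normed $\bbZ$-modules automatically have seminorm zero, and $\|\phi\| \le 1$ by the scaling axiom, with equality achieved on $m$ itself. For (3), the bound $\|\hat{m}\| \le \|m\|$ is immediate from the definition of the operator norm on $M'$, and the reverse bound follows by testing $\hat m$ against the supporting functional supplied by (2); $R$-linearity of the evaluation map is a direct computation $(\widehat{rm})(f) = f(rm) = (fr)(m) = \hat m(fr) = (r\hat m)(f)$ using the right $R$-module structure on $M'$ and the induced left $R$-module structure on $M''$. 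Injectivity in the normed (rather than semi-normed) case then follows from isometry combined with axiom~(1) of a normed module.

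The main obstacle, as is traditional with Hahn-Banach, is the torsion-free inductive sub-case of part (1): one must verify consistency of a family of inequalities parametrized by pairs $(m, m', w, w') \in \bbZ_{>0} \times \bbZ_{>0} \times N \times N$. The algebraic identity displayed above, combined with the scaling axiom, makes the argument go through just as in the classical real vector space setting — the only subtlety being that one must scale through by appropriate integers before invoking the triangle inequality, so that the $\bbZ$-linearity of $\phi$ is all that is needed.
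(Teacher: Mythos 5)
Your proposal is correct, and you also correctly read the direction of the dual map in part (1) (the restriction map $M'\to N'$ is what is meant). The reductions you use for (2) and (3) --- extending the functional $km\mapsto k\|m\|$ on $\bbZ m$, then testing the evaluation map against a supporting functional --- are exactly the paper's. Where you genuinely diverge is in the core of part (1): the paper does not run the Zorn-plus-one-step-extension argument over $\bbZ$ at all. Instead it passes to $\bbQ\otimes_\bbZ M$, equips it with the unique norm making $M\to\bbQ\otimes_\bbZ M$ isometric, observes that $(\bbQ\otimes_\bbZ M)'\cong M'$ isometrically, and then invokes the Hahn--Banach theorem for $\bbQ$-vector spaces with real-valued functionals (whose proof, the paper notes, is verbatim the classical one). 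The comparison is instructive: your torsion sub-case, where $n_0v\in N$ forces the value $\phi(w_0)/n_0$, is precisely what the paper's $\bbQ\otimes-$ step disposes of wholesale (torsion has seminorm zero and dies in $\bbQ\otimes_\bbZ M$), while your torsion-free interval argument, with the identity $mw+m'w'=m'(mv+w')-m(m'v-w)$ and integer rescaling before applying the triangle inequality, is exactly the computation hidden inside the classical one-step extension that the paper cites rather than writes out. Your version is more self-contained and makes the role of the scaling axiom $\|nu\|=|n|_\bbZ\|u\|$ explicit; the paper's is shorter and cleaner at the cost of setting up the induced norm on $\bbQ\otimes_\bbZ M$. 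Two small points to tidy in a final write-up: state the one-step constraint as $|m\alpha+\phi(w)|\le\|\phi\|\,\|mv+w\|$ (or normalize $\|\phi\|=1$ first), and record the one-line check that $\Phi$ is well defined on $N+\bbZ v$ in the torsion case, i.e.\ that $mv+w=m'v+w'$ forces $m\phi(w_0)/n_0+\phi(w)=m'\phi(w_0)/n_0+\phi(w')$.
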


\begin{proof}
Firstly, since the $R$-linearity in the above statement is automatic, we
regard $M,N$ as $\bbZ$-modules. Secondly,
observe that it is enough to prove (1).
Indeed, assertion~(2) implies assertion~(3), and
assertion~(1) implies (2) by setting $N=\bbZ m$, and letting $f$ be an
extension of the map $N\to \bbR$ induced by $m\mapsto \|m\|$.

Assertion~(1) can be easily reduced to the Hahn-Banach theorem for
$\bbQ$-vector spaces and $\bbR$-valued functionals. Although the
Hahn-Banach theorem for $\bbQ$-vector spaces is not commonly stated,
the usual proof for real vector spaces (see e.g.~\cite{conway}*{III~\S 6})
carries over verbatim.

The map $M\to \bbQ\otimes_\bbZ M,~m\mapsto 1\otimes m$ is an injection since
$M$ is torsion-free.
The following norm on $\bbQ\otimes_\bbZ M$ is the unique one that turns this
injection into an isometry: Let $a_i, b_i$ integers for $i=1,\ldots, m$. Let
$c=b_1b_2\cdots b_n$. Then we set
\[
	\bigl\|\sum_{i=1}^m a_i/b_i\otimes m_i\bigr\|_{\bbQ\otimes_\bbZ M}=c^{-1}\bigl\|\sum_{i=1}^m \frac{a_ic}{b_i}m_i\bigr\|_M.
\]
The isometric embedding $M\hookrightarrow\bbQ\otimes_\bbZ M$ induces an isometric
isomorphism $(\bbQ\otimes M)'\rightarrow M'$. Its inverse is given by
\[ M' \ni f\mapsto \bar{f},~~ \bar{f}(q\otimes m)=q f(m). \]

The proof of (1) now follows:
for an injection of $\bbZ$-normed modules, $N\hookrightarrow N$,
we obtain an injection
$\bbQ\otimes N\hookrightarrow \bbQ\otimes M$ which induces
by Hahn-Banach a surjection $(\bbQ\otimes M)'\twoheadrightarrow (\bbQ\otimes N)'$, thus a surjection $M' \twoheadrightarrow N'$.
\end{proof}

The \emph{completion} $\bar{M}$ of a (semi-)normed $R$-module $M$
is defined as the closure of the image of $M$ in $M''$.
Using Lemma~\ref{lem:hahn-banach-lemma} (3) one easily sees that
the completion satisfies the following universal
property: Every bounded homomorphism of $M$ into a complete normed
$R$-module $V$ extends uniquely to a bounded homomorphism from $\bar{M}$ to $V$.

\subsection{Tensor products}\label{sub:tensor products}

Our next goal is to define the tensor product of normed modules.
Our definition below is an extension of the construction known as the
\emph{projective tensor product}, which satisfies a universal property with respect to bilinear maps.

Given a normed right $R$-module $E$, a normed left $R$-module $F$, a normed $\bbZ$-module $V$
and a $\bbZ$-module morphism $\phi:E\otimes_R F\to V$, we obtain the associated $R$-bilinear map
$\tilde{\phi}:E\times F \to V$.
We set
\[ \|\tilde{\phi}\|=\inf\bigl\{c\geq 0~|~ \forall~ f\in F,~e\in E,~\|\tilde{\phi}(f,e)\|\leq c\|f\|\|e\|\bigr\}, \]
and say that $\tilde{\phi}$ is \emph{bounded} if $\|\tilde{\phi}\|<\infty$.

\begin{lemma}\label{lem:universal property tensor}
Let $E$ be a normed right $R$-module and $F$ be a normed left $R$-module.
Then there is a complete normed $\bbZ$-module, denoted by $E\widehat{\otimes}_R F$,
and a $\bbZ$-homomorphism $E\otimes_R F\overset{p}{\to} E\widehat{\otimes}_R F$
satisfying the following universal property:
\[
\xymatrix{E\otimes_R F \ar[r]^{\phi}\ar[d]^p & V \\
          E\widehat{\otimes}_R F \ar@{-->}[ur]_{\exists !\bar{\phi}} &}
\]
In words: For every complete $\bbZ$-module $V$
and for every $\bbZ$-homomorphism
$\phi:E\otimes_R F\to V$ such that the
associated bilinear map $\tilde{\phi}:E\times F \to V$ is bounded,
there exist a unique bounded
$\bbZ$-homomorphism $\bar{\phi}:E\widehat{\otimes}_R F \to V$ such that $\phi=\bar{\phi}\circ p$
and $\|\bar{\phi}\|=\|\tilde{\phi}\|$.

Furthermore, the pair $(E\widehat{\otimes}_R F,p)$ is unique up to
isometric isomorphism.
\end{lemma}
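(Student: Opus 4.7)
The plan is to imitate the classical construction of the projective tensor product of Banach spaces, with two mild modifications: the scalars now come from a normed ring rather than a field, and everything is viewed as a $\bbZ$-module rather than a real vector space. I would first equip the algebraic $R$-balanced tensor product $E\otimes_R F$, seen as a $\bbZ$-module, with the \emph{projective seminorm}
\[
	\|t\|_\pi \defq \inf\Bigl\{\sum_{i=1}^{k} \|e_i\|\cdot\|f_i\|\;\Big|\; t=\sum_{i=1}^{k} e_i\otimes f_i\Bigr\},
\]
where the infimum ranges over all finite representations of $t$ as a sum of elementary tensors in $E\otimes_R F$. Subadditivity follows by concatenating representations, and $\bbZ$-homogeneity $\|nt\|_\pi = |n|_\bbZ\|t\|_\pi$ from axiom~(4) of Definition~\ref{def:normed module} applied to either tensorand. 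The construction is well-defined on the quotient $E\otimes_R F$ because the infimum ranges over \emph{all} representations of the element, and any two representations of the same element are admissible candidates.

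Second, I would take $E\widehat\otimes_R F$ to be the completion of the semi-normed $\bbZ$-module $(E\otimes_R F,\|\cdot\|_\pi)$ in the sense of the preceding subsection, and let $p$ be the canonical map. Given a complete normed $\bbZ$-module $V$ and a $\bbZ$-homomorphism $\phi\colon E\otimes_R F\to V$ whose associated bilinear map $\tilde\phi$ is bounded of norm $c$, the estimate
\[
	\|\phi(t)\|_V\le \sum_{i=1}^k \|\tilde\phi(e_i,f_i)\|_V\le c\sum_{i=1}^k \|e_i\|\cdot\|f_i\|,
\]
valid for every finite representation of $t$, yields $\|\phi(t)\|_V\le c\|t\|_\pi$ upon taking the infimum. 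Hence $\phi$ is bounded as a map of semi-normed $\bbZ$-modules and extends uniquely to a bounded $\bbZ$-homomorphism $\bar\phi\colon E\widehat\otimes_R F\to V$ of operator norm at most $c$ via the universal property of the completion established just before the lemma.

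To conclude $\|\bar\phi\|=\|\tilde\phi\|$ the remaining inequality comes from
\[
	\|\tilde\phi(e,f)\|_V = \|\bar\phi(p(e\otimes f))\|_V \le \|\bar\phi\|\cdot\|e\otimes f\|_\pi\le \|\bar\phi\|\cdot\|e\|\cdot\|f\|,
\]
and uniqueness of $(E\widehat\otimes_R F,p)$ up to isometric isomorphism is the standard formal consequence of a universal property. I expect no serious obstacle in this argument; the one point to be mindful of is that $\|\cdot\|_\pi$ is genuinely only a seminorm even when $E$ and $F$ are normed, so it is important that the completion construction of the previous subsection passes implicitly to the Hausdorff quotient via embedding into the double dual, and that Hahn--Banach for normed modules (Lemma~\ref{lem:hahn-banach-lemma}) guarantees this embedding loses no information beyond the null space. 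Everything else is a direct transcription of the Banach-space argument.
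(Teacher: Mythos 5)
Your route differs from the paper's: you equip the algebraic tensor product with the classical projective seminorm and then complete, whereas the paper never introduces $\|\cdot\|_\pi$ at all --- it defines $E\widehat{\otimes}_R F$ directly as the closure of the image of the natural map $E\otimes_R F\to \Hom_R(E,F')'$, $e\otimes f\mapsto (T\mapsto\langle Te,f\rangle)$, so that completeness and all the norm axioms are inherited from a dual module, and then verifies the universal property by dualizing a map $V'\to\Hom_R(E,F')$. Your approach is the more familiar one, but it has a genuine gap precisely at the point where the integral setting diverges from the Banach one: the homogeneity $\|nt\|_\pi=|n|_{\bbZ}\|t\|_\pi$ does \emph{not} ``follow from axiom (4) applied to either tensorand''. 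That argument yields only $\|nt\|_\pi\le|n|_{\bbZ}\|t\|_\pi$; the reverse inequality would require converting a representation of $nt$ into a representation of $t$ of $1/|n|$ the cost, i.e.\ dividing by $n$ inside the modules, which is exactly the rescaling step available over $\bbR$ but not over $\bbZ$. Without axiom (4) of Definition~\ref{def:normed module}, $(E\otimes_R F,\|\cdot\|_\pi)$ is not a semi-normed $\bbZ$-module in the paper's sense, so you may not invoke the completion of the preceding subsection: its universal property, and the isometry of $M\to M''$ that you correctly single out as the crucial input, rest on Lemma~\ref{lem:hahn-banach-lemma}, whose proof passes to $\bbQ\otimes_\bbZ M$ with a norm that is well defined only because of axiom (4).

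The gap is repairable without abandoning your strategy: replace $\|\cdot\|_\pi$ by its regularization $\|t\|\defq\inf_{n\ge 1}\tfrac1n\|nt\|_\pi$ (by subadditivity of $n\mapsto\|nt\|_\pi$ and Fekete's lemma this is also the limit), which does satisfy axioms (2)--(4). Your key estimate survives, since $\|\phi(t)\|_V=\tfrac1n\|\phi(nt)\|_V\le\tfrac{\|\tilde\phi\|}{n}\|nt\|_\pi$ for every $n$ gives $\|\phi(t)\|_V\le\|\tilde\phi\|\,\|t\|$ for the regularized seminorm, and the lower bound $\|\tilde\phi\|\le\|\bar\phi\|$ only uses elementary tensors, where the regularized seminorm is still bounded by $\|e\|\cdot\|f\|$. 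With that correction your object and the paper's are isometrically isomorphic by the uniqueness clause; the remainder of your argument (uniqueness of $\bar\phi$ by density, the norm identity, uniqueness of the pair) is fine.
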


\begin{definition}\label{def:projective tensor product}
	Retain the setting of the previous lemma.
	The normed module $E\widehat{\otimes}_R F$ is called the {\em projective tensor
	product} of $E$ and $F$ over $R$.
\end{definition}

\begin{proof}
There is a natural $\bbZ$-module morphism:
\[ E\otimes_R F {\to} \Hom_R(E,F')',\quad e\otimes f\mapsto\bigl(T \mapsto \langle Te,f\rangle\bigr), \]
where $\langle\_,\_\rangle$ is the evaluation map $F'\times F\to\bbR$.
We denote the closure of the image by $E\widehat{\otimes}_R F$ and the map
$E\otimes_R {\to} E\widehat{\otimes}_R F$ by $p$.
By Lemma~\ref{lem:dual is complete},
$\Hom_R(E,F')'$ is complete, hence so is $E\widehat{\otimes}_R F$.

For every complete $\bbZ$-module $V$ and for every $\bbZ$-homomorphism
$\phi:E\otimes_R F\to V$ such that the associated bilinear
map $\tilde{\phi}:E\times F \to V$ is bounded, we obtain
the map
\[ V' \xrightarrow{\psi} \Hom_R(E,F'), \quad \psi(v')= e\mapsto \bigl(f \mapsto v'(\tilde{\phi}(e,f))\bigr), \]
and it is clear that the composition
\[ E\otimes_R F \xrightarrow{p} \Hom_R(E,F')' \xrightarrow{\psi'} V''\]
coincides with
\[ E\otimes_R F \xrightarrow{\phi} V \xrightarrow{i} V'',\]
where $i$ is the canonical map given by evaluation.
Since $V$ is complete, $i(V)$ is closed in $V''$ by
Lemma~\ref{lem:hahn-banach-lemma}, and therefore $\psi'^{-1}(i(V))$ is
closed in $\Hom_R(E,F')'$.
It follows that the closure of $p(E\otimes_R F)$, that is $E\widehat{\otimes}_R F$,
lies in $\psi'^{-1}(i(V))$.
Therefore $\psi'(E\widehat{\otimes}_R F)\subset i(V)\cong V$, and we obtain a map $\bar{\phi}:E\widehat{\otimes}_R F\to V$
such that $\bar{\phi}\circ p=\phi$.
We leave it to the reader to check using Lemma~\ref{lem:hahn-banach-lemma}
that indeed $\|\bar{\phi}\|=\|\tilde\phi\|$.

The uniqueness of the pair $(E\widehat{\otimes}_R F,p)$ up to isomorphism follows
directly from the universal property.
Observe that by choosing the above $\phi$ to be the identity map
of $E\widehat{\otimes}_R F$ we get that $\|\tilde{p}\|=1$.
It follows that the unique isomorphism between modules
having the above universal property is actually isometric.
\end{proof}

We summarize some of the properties of the projective tensor product.
The proofs are easy and use the universal property above; we leave them
to the reader.

\begin{lemma}  \label{cor:tensor}
Let $M$ be an normed $R$-module. The following isomorphisms are natural and
isometric:
\begin{enumerate}
\item
$R \widehat{\otimes}_R M$ is isomorphic to the completion $\bar{M}$.
In particular, $\bbZ \widehat{\otimes}_{\bbZ} M$ is isomorphic to $\bar{M}$.
\item
$\bbR \widehat{\otimes}_{\bbZ} M$ is a Banach space.
If $M$ is a normed real vector space, $\bbR \widehat{\otimes}_{\bbZ} M$ is
isomorphic to $\bar{M}$.
If $M$ is a Banach space, $\bbR \widehat{\otimes}_{\bbZ} M$ is isomorphic to $M$.
\item
We have
$M'\cong (\bbR \widehat{\otimes}_{\bbZ} M)'
\cong \Hom_\bbR(\bbR \widehat{\otimes}_{\bbZ} M,\bbR)$.
In particular, if $M$ is a Banach space, then $M'$ is
isomorphic to the dual of $M$ as a Banach space.
\end{enumerate}
\end{lemma}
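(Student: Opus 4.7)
The plan is to prove each of (1)--(3) by constructing maps in opposite directions using the universal property of the projective tensor product (Lemma~\ref{lem:universal property tensor}) together with the universal property of completion, and then verifying that these maps are mutually inverse isometries. Throughout, I expect to use heavily that every normed module is automatically torsion-free as a $\bbZ$-module.

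For (1), the bilinear map $R\times M\to \bar M$, $(r,m)\mapsto r\cdot m$, has operator norm at most $1$ and so induces, via the universal property, a bounded map $R\widehat{\otimes}_R M\to \bar M$. Conversely, the bounded homomorphism $m\mapsto 1\otimes m$ from $M$ into the complete module $R\widehat{\otimes}_R M$ extends uniquely to a bounded map $\bar M\to R\widehat{\otimes}_R M$. Checking that these two maps are mutually inverse isometries is a routine verification on the dense subspaces $M\subset\bar M$ and $p(R\otimes_R M)\subset R\widehat{\otimes}_R M$, after which continuity finishes the argument.

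For (2), completeness of $\bbR\widehat{\otimes}_\bbZ M$ is built into Lemma~\ref{lem:universal property tensor}, and the $\bbR$-action coming from scalar multiplication on the first tensor factor extends continuously to make it a Banach space. When $M$ is itself a normed real vector space, the same strategy as in (1) produces candidate inverse maps between $\bbR\widehat{\otimes}_\bbZ M$ and $\bar M$. The delicate step is verifying that the composite $\bbR\widehat{\otimes}_\bbZ M\to\bar M\to\bbR\widehat{\otimes}_\bbZ M$ is the identity on the image of $\bbR\otimes_\bbZ M$; this amounts to checking the identity $r\otimes m=1\otimes(rm)$ in $\bbR\widehat{\otimes}_\bbZ M$. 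For $r=p/q\in\bbQ$, $\bbZ$-bilinearity yields $q\cdot(r\otimes m)=q\cdot(1\otimes rm)$, and torsion-freeness of the normed module $\bbR\widehat{\otimes}_\bbZ M$ allows one to cancel $q$; for real $r$ one then approximates by rationals and passes to the limit, using continuity in $r$ of both sides. The Banach-space case of (2) is immediate since there $\bar M=M$.

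For (3), the universal property provides a natural bijection between bounded $\bbZ$-homomorphisms $\phi\colon M\to\bbR$ and elements $\bar\phi\in(\bbR\widehat{\otimes}_\bbZ M)'$, via $\bar\phi(r\otimes m)=r\phi(m)$, with matching operator norms. To further identify $(\bbR\widehat{\otimes}_\bbZ M)'$ with $\Hom_\bbR(\bbR\widehat{\otimes}_\bbZ M,\bbR)$, I would observe that any bounded $\bbZ$-linear functional on a real topological vector space is automatically $\bbR$-linear: rational scalars come from divisibility and torsion-freeness, and real scalars from continuity, in the same spirit as the key step of (2). The final Banach-space claim then follows by combining this identification with part (2). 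The main obstacle is precisely the verification in (2) that $r\otimes m=1\otimes(rm)$ for irrational $r$; everything else reduces to routine unwinding of the two universal properties.
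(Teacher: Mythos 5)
Your proposal is correct and matches the paper's intended approach: the paper gives no written proof of this lemma, stating only that ``the proofs are easy and use the universal property above; we leave them to the reader,'' and your argument does exactly that, pairing the universal property of $\widehat{\otimes}$ with the universal property of the completion. The one step you rightly flag as non-routine --- deducing $r\otimes m = 1\otimes(rm)$ for real $r$ from the rational case via torsion-freeness and then continuity --- is handled correctly and is indeed the point on which parts (2) and (3) hinge.
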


The functor $M\mapsto \bbR \widehat{\otimes}_{\bbZ} M$ is called {\em Banachification}.

\begin{example}\label{exa:L1(X,Z) example}
Let	$(X,\mu)$ be a measure space.
Let $M$ be the abelian group consisting of finite-valued measurable functions from $X$ to $\bbZ$ supported on a set of finite measure.
Integration gives a semi-norm on $M$, turning it into a semi-normed $\bbZ$-module.
The completion $\bar{M}$ is denoted by $\rmL^1(X,\bbZ)$.
It is the normed module consisting of
$\mu$-integrable measurable maps $f\colon X\to\bbZ$ modulo null sets.
The Banachification of $M$ is naturally identified with $\rmL^1(X,\bbR)$.
The dual space $M'$ is isomorphic to the Banach dual of $\rmL^1(X,\bbR)$, hence can be identified with
$\rmL^\infty(X,\bbR)$.

Let $(Y,\nu)$ be another measure space. Let $\pr_X\colon X\times Y\to X$ and
$\pr_Y\colon X\times Y\to Y$ be the projections onto $X$ and $Y$,
respectively. The isometric bilinear map
\[
	L^1(X,\bbZ)\times L^1(Y,\bbZ)\to L^1(X\times Y,\bbZ),~(f,g)\mapsto (f\circ\pr_X)\cdot (g\circ\pr_Y),
\]
where $X\times Y$ carries the product measure $\mu\times\nu$,
induces, by the universal property, an isometric map
\[ L^1(X,\bbZ) \widehat{\otimes}_{\bbZ}  L^1(Y,\bbZ) \xrightarrow{\cong}  L^1(X \times Y,\bbZ). \]
Since the image is closed and dense, this map is an isometric isomorphism.
Similarly, we obtain an isometric isomorphism
\[ L^1(X,\bbR) \widehat{\otimes}_{\bbR}  L^1(Y,\bbR) \xrightarrow{\cong}  L^1(X \times Y,\bbR). \]
\end{example}

\begin{example} \label{ex:L1-C}
Taking in the previous example $Y=G^{k+1}$ endowed with
the counting measure, we obtain the isomorphisms
\begin{align*}
	C^{(1)}_k(G) \widehat{\otimes}_{\bbZ} L^1(X,\bbZ) &\xrightarrow{\cong} L^1(G^{k+1}\times X,\bbZ)\\
 	C^{(1)}_k(G) \widehat{\otimes}_{\bbZ} L^1(X,\bbR) &\xrightarrow{\cong} L^1(G^{k+1}\times X,\bbR),
\end{align*}
where $G^{k+1}\times X$ carries the product of the counting measure and the given measure on $X$.

If we endow $Y=G^{k+1}$ with the measure that assigns to each point
$(g_0,\ldots,g_k)$ the weight $1+\diam(g_0,\ldots,g_k)$, then
$L^1(Y)\cong C^{(1,1)}_k(G)$ as normed $\bbZ G$-modules, and we
obtain an isomorphism
\begin{multline*}
C^{(1,1)}_k(G) \widehat{\otimes}_{\bbZ} L^1(X,\bbZ) \xrightarrow{\cong}
 \Bigl\{f\in L^1(G^{k+1}\times X,\bbZ)~|\\\int_{G^{k+1}\times X} |f(g_0,\ldots,g_k,x)|\bigl(1+\diam(g_0,\ldots,g_k)\bigr)<\infty\Bigr\};
\end{multline*}
similarly for $C^{(1,1)}_k(G) \widehat{\otimes}_{\bbZ} L^1(X,\bbR)$.
\end{example}

\begin{remark}\label{rem:recover projective tensor product}
Let $E$ and $F$ be Banach spaces over $\bbR$.
Observe that
\[E\widehat{\otimes}_\bbZ F \cong E\widehat{\otimes}_\bbR F,\]
since the left hand side satisfies the universal property of the right hand side.
The universal property satisfied by  $E\widehat{\otimes}_\bbR F$
is the one satisfied by the classical projective tensor product of Banach spaces.
It follows that Definition~\ref{def:projective tensor product} generalizes the classical definition of projective tensor product (see \cites{grothendieck, diestel}).
\end{remark}

\section{Theorem~\ref{bounded-chian-map} implies Theorem~\ref{hyp-sob}}
\label{sec:proof of main thm}

Throughout this section, let $G$ be a finitely generated group with a
fixed word metric. We consider $\bbZ G$ as a normed ring
endowed with the $\ell^1$ norm.

\begin{definition}[Rips complex]\label{def: rips complex}
	Let $X$ be a metric space.
	Let $r>0$. We denote by $C^r_\ast(X)$ the subcomplex of $C_\ast(X)$ such that
	$C^r_n(X)\subset C_n(X)$ is the submodule generated by all $(n+1)$-tuples
	$(x_0,\dots,x_n)$ whose diameter in $X$ is at most $r$.
	If $X=G$ is a group as above, then
	$C_\ast^r(G)$ is a $\bbZ G$-subcomplex of $C_\ast(G)$.
\end{definition}

\begin{theorem}[\cite{bridson+haefliger}*{3.23~Proposition on p.~469}]\label{thm:rips}
	Let $G$ by a $\delta$-hyperbolic group. If $r\ge 4\delta+6$, then
    \[
    	H_n(C^r_\ast(G))=\begin{cases}
						\bbZ & \text{ if $n=0$,}\\
						0    & \text{ if $n>0$.}
					\end{cases}
    \]	
\end{theorem}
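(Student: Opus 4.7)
My plan is to construct an explicit contracting chain homotopy on $C^r_\ast(G)$ via a radial retraction toward a basepoint; hyperbolicity will enter exactly to guarantee that this retraction preserves the bounded-diameter condition.

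Fix the identity $e \in G$ as basepoint and, for each $g \in G$, fix once and for all a geodesic word, yielding a path $e = g^{(0)}, g^{(1)}, \ldots, g^{(|g|)} = g$ in the Cayley graph (with $|g| \defq d(e,g)$); extend by $g^{(s)} = g$ for $s \ge |g|$. Define the $t$-th retraction $\rho_t(g) \defq g^{(\max\{|g|-t,\,0\})}$, so that $\rho_0 = \id$, consecutive retractions differ coordinate-wise by at most one step toward $e$, and $\rho_t(g) = e$ for all $t$ sufficiently large. I apply $\rho_t$ coordinate-wise on simplices in $C^r_\ast(G)$.

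The geometric core is a fellow-traveler estimate of the form
\[
    d\bigl(\rho_t(g),\,\rho_t(h)\bigr) \le \max\bigl(d(g,h),\,K(\delta)\bigr) \qquad (g,h \in G,\ t \ge 0),
\]
with some $K(\delta) \le 4\delta + 2$. This is a standard consequence of the slim-triangles condition applied to the triangle $(e,g,h)$: the two geodesics out of $e$ stay $2\delta$-fellow travelers up to the "divergence point," beyond which the residual segments have length bounded in terms of $d(g,h)$ and $\delta$. Combined with the hypothesis $r \ge 4\delta + 6$, this guarantees that $\rho_t$ sends $C^r_n(G)$ into itself for every $t$.

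Given the $\rho_t$, the classical prism construction produces operators $P_t\colon C^r_n(G)\to C^r_{n+1}(G)$ satisfying
\[
    d\circ P_t + P_t \circ d = (\rho_t)_\ast - (\rho_{t+1})_\ast,
\]
where $P_t(g_0, \ldots, g_n)$ is the usual alternating sum of $(n+2)$-tuples interpolating vertex-wise between $\rho_t$ and $\rho_{t+1}$; each summand has diameter $\le r$ by the fellow-traveler estimate. Setting $h \defq \sum_{t \ge 0} P_t$ (a locally finite sum, since on a fixed simplex $\rho_t = \rho_{t+1}$ once $t > \max_i |g_i|$) and telescoping yields $dh + hd = \id - \epsilon$, where $\epsilon$ collapses every simplex to $(e,\ldots,e)$. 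This exhibits $C^r_\ast(G)$ as chain homotopy equivalent to $\bbZ$ concentrated in degree zero, giving the stated homology. The only delicate point is pinning down the explicit threshold $r \ge 4\delta + 6$: the qualitative argument merely needs $K(\delta)$ to be finite, but extracting the sharp additive constant requires careful bookkeeping in the slim-triangles estimate. The remaining ingredients—the prism formula, the telescoping, and local finiteness—are purely formal.
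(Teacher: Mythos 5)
The paper does not prove this statement at all---it cites Bridson--Haefliger, III.$\Gamma$.3.23, whose proof contracts a finite subcomplex by repeatedly moving only the vertex \emph{farthest} from the basepoint, and moving it by a macroscopic amount (roughly $r/2$) along a geodesic toward the basepoint. Your simultaneous one-step radial retraction is a different mechanism, and its geometric core---the estimate $d(\rho_t(g),\rho_t(h))\le\max\bigl(d(g,h),K(\delta)\bigr)$---is not a standard consequence of slim triangles and is false in general. What slim triangles actually give is the additive estimate $d(\rho_t(g),\rho_t(h))\le d(g,h)+C\delta$: comparing the triangle $(e,g,h)$ with its tripod, the points at parameters $|g|-t$ and $|h|-t$ lie over tripod points at mutual distance $d(g,h)-2t$ (both beyond the centre) or $\bigl|\,|g|-|h|\,\bigr|$ (both before it), in each case only up to an additive error of order $\delta$, and nothing makes that error disappear once $d(g,h)$ exceeds $K(\delta)$. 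Concretely, for $\delta\ge 1$ nothing in $\delta$-hyperbolicity prevents the penultimate vertex of the chosen geodesic $[e,g]$ from satisfying $d(\rho_1(g),h)=d(g,h)+1$ (the last step of $[e,g]$ may point directly away from $h$), and doing this on both sides gives $d(\rho_1(g),\rho_1(h))=d(g,h)+2$. Since $C^r_n(G)$ contains simplices of diameter exactly $r$, an additive loss of even $1$ already throws $\rho_1$ out of the subcomplex; so the claim that $\rho_t$ preserves $C^r_n(G)$ is unjustified, and with it the whole contraction. This is precisely the difficulty the farthest-vertex argument is built to avoid: because the moved vertex is at maximal distance from the basepoint and is displaced by about $r/2$, there is enough slack to absorb the $O(\delta)$ error, and the threshold $r\ge 4\delta+6$ is calibrated to that argument, not to yours.

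Two further, smaller gaps would remain even if the fellow-traveller estimate held. First, the prism simplices interpolating between $\rho_t$ and $\rho_{t+1}$ contain vertices of both generations, so their diameter is bounded only by $r+1$, not $r$; hence $P_t$ does not map $C^r_n(G)$ into $C^r_{n+1}(G)$ as asserted. Second, in the homogeneous complex the prism operator between two \emph{equal} maps is not zero (its summands are degenerate but nonzero tuples), so $\sum_{t\ge 0}P_t$ is not locally finite unless you explicitly redefine $P_t$ to vanish on simplices where $\rho_t=\rho_{t+1}$, which is consistent but must be said. These two points are repairable; the fellow-traveller estimate is the essential gap, and repairing it amounts to redoing the cited Bridson--Haefliger argument.
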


If we endow $C_\ast^r(G)$ with the
$\ell^1$-norm, we use sometimes the notation $C_\ast^{r,(1)}(G)$.

\begin{lemma}\label{lem:tensor with complete module}
Let $E$ be a complete normed $\bbZ G$-module. For every $n\in\bbN$ the natural
map
\begin{equation}\label{eq:natural map over Z}
C^r_n(G)\otimes_{\bbZ G}E \to C^{r,(1)}_n(G)\widehat\otimes_{\bbZ G}E
\end{equation}
is an isomorphism.
\end{lemma}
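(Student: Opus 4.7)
The approach is to identify both sides of the claimed isomorphism, naturally and compatibly, with the same finite direct sum of copies of $E$. The crucial structural observation is that the diagonal left-translation action of $G$ on $G^{n+1}$ has trivial stabilizers: if $g\cdot(g_0,\ldots,g_n)=(g_0,\ldots,g_n)$, then $gg_0=g_0$, forcing $g=1$. Consequently $C^r_n(G)$ is a \emph{free} $\bbZ G$-module, with basis in bijection with the orbit set $T$ of tuples of diameter at most $r$; since $G$ is finitely generated, the closed $r$-ball around the identity is finite, so $T$ is finite (take representatives of the form $(1,h_1,\ldots,h_n)$ with each $h_i$ in that ball).

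Given this, the left-hand side collapses immediately. Freeness gives the canonical identification
\[
	C^r_n(G)\otimes_{\bbZ G}E\;\cong\;\bigoplus_{\tau\in T}E.
\]
For the right-hand side, I first note that the same decomposition is \emph{isometric} when the $\ell^1$-norm is imposed: the orbit basis $\{g\tau:g\in G,\ \tau\in T\}$ of $C^r_n(G)$ is in bijection with $G\times T$ by the triviality of stabilizers, so the $\ell^1$-norm on $C^{r,(1)}_n(G)$ is the $\ell^1$-sum of the $\ell^1$-norms on the submodules $\bbZ^{(1)}G\cdot\tau$. Using the universal property of $\widehat{\otimes}_{\bbZ G}$ to see that it commutes with finite direct sums, and then applying Lemma~\ref{cor:tensor}(1) together with the completeness of $E$ to evaluate each summand as $\bbZ^{(1)}G\,\widehat{\otimes}_{\bbZ G}E\cong\bar E=E$, one obtains
\[
	C^{r,(1)}_n(G)\,\widehat{\otimes}_{\bbZ G}E\;\cong\;\bigoplus_{\tau\in T}E.
\]
Both identifications send a generator $\tau\otimes e$ (for $\tau\in T$) to $e$ in the $\tau$-th summand, so the natural map of the lemma corresponds to the identity on $\bigoplus_{\tau\in T}E$ and is therefore an isomorphism.

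The only step of any substance is the distributivity of $\widehat{\otimes}_{\bbZ G}$ over the finite direct sum, which follows directly from its universal property together with the fact that a finite direct sum of complete normed modules is complete (so $\bigoplus_\tau(\bbZ^{(1)}G\,\widehat{\otimes}_{\bbZ G}E)$ already satisfies the required universal property among complete targets). Beyond that there is no genuine obstacle: the content of the lemma is really the triviality of the diagonal stabilizers, which collapses both apparently different objects to the same finite direct sum of copies of $E$.
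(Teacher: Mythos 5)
Your proposal is correct and follows essentially the same route as the paper: the paper likewise observes that $C^r_n(G)=C^{r,(1)}_n(G)$ is a free $\bbZ G$-module of finite rank $N$ (counting orbit representatives $(e,g_1,\ldots,g_n)$ of diameter at most $r$) and invokes Lemma~\ref{cor:tensor} to identify both sides with $E^N$. You merely spell out in more detail the steps the paper leaves implicit (trivial stabilizers, compatibility of $\widehat\otimes_{\bbZ G}$ with finite direct sums, and $\bbZ G\,\widehat\otimes_{\bbZ G}E\cong\bar E=E$ by completeness).
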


\begin{proof}
Let $N$ be the number of all tuples $(e,g_1,\ldots,g_n)$ in $G^{n+1}$ with
diameter at most $r$. The $\bbZ G$-module $C^r_n(G)=C^{r,(1)}_n(G)$ is isomorphic
to the free module $\bbZ G^N$. From that and from
Lemma~\ref{cor:tensor}
one sees that both sides
in~\eqref{eq:natural map over Z} are canonically isomorphic to $E^N$.
\end{proof}

We need the following continuous version of the fundamental lemma in
homological algebra.

\begin{lemma} \label{lem:basic homotopy}
Let $n\ge 1$.
Let $\phi_i\colon C_i^{(1,1)}(G)\to C_i^{(1)}(G)$, $0\le i\le n$, be a $\bbZ G$-chain homomorphism
up to degree $n$, that is, we have $d\phi_i=\phi_{i-1}d$ for every $0\le i\le n$.
Assume that $\phi$ induces the identity on the zeroth homology.
Then there are bounded $\bbZ G$-homomorphisms
$h_i\colon C_i^{(1,1)}(G)\to C_{i+1}^{(1)}(G)$, $0\le i\le n$, such that
\[
	dh_i+h_{i-1}d=\phi_i-\id~~\text{ for every $0\le i\le n$,}
\]
where $h_{-1}d=0$ is understood.
That is, $h_\ast$ is a chain homotopy between $\phi_\ast$ and the identity
up to degree $n$.
\end{lemma}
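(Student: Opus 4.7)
The plan is to proceed by induction on $i$, using the standard cone-on-basepoint contracting homotopy $s\colon C_n^{(1)}(G)\to C_{n+1}^{(1)}(G)$, $s(g_0,\ldots,g_n) := (e,g_0,\ldots,g_n)$, which is an $\ell^1$-isometry satisfying $ds+sd = \id$ for $n\ge 1$ and, in degree zero, $ds(z) = z - \epsilon(z)\cdot e$, where $\epsilon\colon C_0(G)\to\bbZ$, $\epsilon(g)=1$, is the augmentation. Since $s$ is not $\bbZ G$-equivariant, the naive formula $h_i = s\circ(\phi_i - \id - h_{i-1}d)$ would fail to produce an equivariant map. I will instead define $h_i$ on the free $\bbZ G$-basis of $C_i(G)$ consisting of simplices $(e,g_1,\ldots,g_i)$ and extend $\bbZ G$-linearly; equivariance of $\phi_*$, $d$, and (inductively) $h_{i-1}$ will then propagate the homotopy identity from basis simplices to arbitrary chains.

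For the base case $i=0$, set $h_0(e) := s(\phi_0(e) - e)$, where $e$ is the identity of $G$ viewed as a $0$-simplex. Because $\phi_0$ induces the identity on $H_0(G)=\bbZ$, the augmentation of $\phi_0(e)-e$ vanishes and the degree-zero formula gives $dh_0(e)=\phi_0(e)-e$. For the inductive step, given a basis simplex $\alpha=(e,g_1,\ldots,g_i)$, set
\[ h_i(\alpha) := s\bigl(\phi_i(\alpha) - \alpha - h_{i-1}(d\alpha)\bigr). \]
A short calculation using $d\phi_i = \phi_{i-1}d$, $d^2=0$, and the inductive identity $dh_{i-1}+h_{i-2}d = \phi_{i-1}-\id$ shows that the argument of $s$ is a cycle, so the relation $ds+sd = \id$ collapses to $dh_i(\alpha) = \phi_i(\alpha) - \alpha - h_{i-1}(d\alpha)$; equivariance extends this identity to all of $C_i^{(1,1)}(G)$.

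For boundedness, I use that $s$ is an $\ell^1$-isometry and that a basis simplex $\alpha=(e,g_1,\ldots,g_i)$ satisfies $\|\alpha\|_{1,1} = 1+\diam\alpha$ and $\|d\alpha\|_{1,1}\le (i+1)(1+\diam\alpha)$. Combined with the given bound on $\|\phi_i\|_{(1,1)\to(1)}$ and the inductive bound on $\|h_{i-1}\|_{(1,1)\to(1)}$, this yields
\[ \|h_i(\alpha)\|_1 \le \bigl(\|\phi_i\| + 1 + (i+1)\|h_{i-1}\|\bigr)\cdot (1+\diam\alpha) =: C_i\|\alpha\|_{1,1}. \]
Since $G$ acts on $C_{i+1}^{(1)}(G)$ by $\ell^1$-isometries and the Sobolev norm of a chain is the $\bbZ G$-weighted sum of $(1+\diam\alpha)$ over its basis expansion, the per-basis estimate upgrades to the operator bound $\|h_i\|\le C_i$ on the equivariant extension.

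The one genuine subtlety is the non-equivariance of $s$, which is unavoidable in principle: no $\bbZ G$-equivariant contracting homotopy exists on a free resolution of the trivial module. This is what forces the detour through a $\bbZ G$-basis, and it is precisely at this point that the Sobolev norm on the domain becomes essential — one has $\|s(\alpha)\|_1 = 1$ for a basis simplex $\alpha$ while $\|\alpha\|_{1,1}$ grows linearly in $\diam\alpha$, so the Sobolev weight absorbs the basepoint-dependent diameter factor introduced by $s$. A naive attempt to prove the analogous statement with the $\ell^1$-norm on both sides would fail for exactly this reason.
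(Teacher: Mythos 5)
Your proof is correct and follows essentially the same route as the paper: both use the non-equivariant cone contraction $(g_0,\dots,g_i)\mapsto(e,g_0,\dots,g_i)$ applied to $\phi_i-\id-h_{i-1}d$ on the $\bbZ G$-basis of simplices beginning with $e$, extended equivariantly, with the Sobolev norm on the domain absorbing the diameter growth. The only cosmetic difference is in degree zero, where the paper takes an arbitrary $x\in C_1(G)$ with $dx=\phi_0(e)-e$ rather than your explicit choice $s(\phi_0(e)-e)$.
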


\begin{proof}
Recall that $C_\ast^{(1,1)}(G)=C_\ast(G)$ and $C_\ast^{(1)}(G)=C_\ast(G)$ as
$\bbZ G$-modules.
One verifies that $h'_i:C_i(G)\to C_{i+1}(G)$ defined
by
\[
	h'_i(g_0,\dots,g_i)=(e,g_0,\dots,g_i)
\]
is a (non-equivariant) chain contraction of the augmented chain
complex $C_\ast(G)$ (see the comment on augmented chain complexes after Lemma~
\ref{lem:tree-homotopy}).

Obviously, $h'_\ast$
is continuous with respect to the $\ell^1$-norm. Let $x\in C_1(G)$ be an element
such that $\phi_0(e)-e=dx$ for the unit $e\in G$;
this element exists since $\phi_\ast$ induces
the identity on $0$-th homology. Then let $h_0\colon C_0(G)\to C_1(G)$ be the $\bbZ G$-homomorphism
with $h_0(g)=g x$. Clearly, $h_0$ is bounded and satisfies
$\phi_0-\id=dh_0$.

Now suppose that we have already
constructed an equivariant bounded map $h_i:C_i(G)\to C_{i+1}(G)$ for
$i=0,\ldots,k-1$, where $k\le n$, such that
\begin{equation}\label{eq:homotopy}
	dh_i+h_{i-1}d=\phi_i-\id.
\end{equation}
for all $i=1,\ldots,k-1$ (where we set $h_{-1}=0$).
Then define
\[
	h_k(e,g_1,\ldots,g_k)=\bigl(h'_k\circ ( \phi_k-\id-h_{k-1}d) \bigr)(e,g_1,\ldots,g_k)
\]
and extend $h_k$ to all of $C_k(G)$ by $\bbZ G$-linearity. It is easy to see that $h_k$
is bounded with respect to the Sobolev norm in the domain and the $\ell^1$-norm in the
target and satisfies~\eqref{eq:homotopy}.
\end{proof}

\begin{proof}[Proof that Theorem~\ref{bounded-chian-map} implies Theorem~\ref{hyp-sob}]
Let $G$ be a $\delta$-hyperbolic group.
Let $E$ be a complete normed $\bbZ G$-module.
The $\subset$-inclusion in the statement of
Theorem~\ref{hyp-sob} is clear. Let $n\ge 0$. It remains to show that
\begin{equation}\label{eq:to show}
	\im\Bigl(H_n(G, E)\to H_n^{(1)}(G,E)\Bigr)\supset
	\im\Bigl(H_n^{(1,1)}(G,E)\to H_n^{(1)}(G,E)\Bigr).
\end{equation}

Let $r(i)=r(i,\delta)$, $i\ge 0$, be the constants and
$f_\ast\colon C_\ast(G)\to C_\ast(G)$ be the map
provided by
Theorem~\ref{bounded-chian-map}. Let
\[
	r=\max\{4\delta+6, r(0),r(1),\ldots, r(n+1)\}.
\]
The complex $C^r_\ast(G)$ is acyclic according to Theorem~\ref{thm:rips}.
We have $\im(f_i)\subset C^r_i(G)$ for every $0\le i\le n+1$.

The map $f_\ast$ is
a bounded chain homomorphism $f_\ast\colon C_\ast^{(1,1)}(G)\to C^{r,(1)}_\ast(G)$
up to degree $n+1$.
Since $C^r_\ast(G)$ is acyclic, it is a free $\bbZ G$-resolution of $\bbZ$. Thus,
by the fundamental lemma of homological algebra, there is a $\bbZ G$-chain map
$g_\ast\colon C^r_\ast(G)\to C_\ast(G)$. Since $C_i^r(G)$ is finitely generated as a $\bbZ G$-module for each $i\ge 0$, the map $g_i$ is automatically continuous with
respect to the $\ell^1$-norms. Consider the following diagram for $\ast\le n+1$:

\[
	\xymatrix{C_\ast^{(1,1)}(G)\widehat\otimes_{\bbZ G} E\ar[r]^{f_\ast\widehat\otimes\id_E}\ar[d] & C_\ast^{r,(1)}(G)\widehat\otimes_{\bbZ G} E &C_\ast^{r,(1)}(G)\otimes_{\bbZ G} E\ar[d]^{g_\ast\otimes\id_E}\ar@{=}[l]\\
	C_\ast^{(1)}(G)\widehat\otimes_{\bbZ G} E & &C_\ast(G)\otimes_{\bbZ G} E\ar[ll]}.
\]
Note that $f_\ast$ induces a map $f_\ast\widehat\otimes\id_E$ on the completed tensor
products since it is continuous.
The unlabeled arrows in the diagram are induced by natural inclusions.
The equality in the diagram follows from Lemma~\ref{lem:tensor with complete module}.
The diagram is commutative up to chain homotopy by Lemma~\ref{lem:basic homotopy}. This
implies~\eqref{eq:to show}.
\end{proof}

\section{Tree approximation and the proof of Theorem~\ref{bounded-chian-map}} 
\label{sec:tree_approximation_and_the_proof_of_theorem_bounded-chian-map}

\subsection{Tree approximation} 
\label{sub:tree_approximation}


\begin{definition}
	Let $G$ by a finitely generated group. Fix a finite symmetric generating set in $G$.
	Let $\calG$ be the corresponding Cayley graph of $G$.
	A family $\calW=\{w_{x,y}\}_{(x,y)\in G^2}$ is called
	a \emph{full family of geodesics in $G$} if
	$w_{x,y}\colon [0,d(x,y)]\to\calG$ is a geodesic from $x$ to $y$
	in $\calG$ for every pair $(x,y)\in G^2$. For any
	$n$-tuple $Y=(y_0,\ldots,y_{n-1})\in G^n$
	let $[Y]_\calW\subset G$ be the set of vertices of the union $\calG(Y)$
	of the images of all geodesics $w_{y_i,y_j}$ with
	$i<j$.
\end{definition}

Recall that a map between metric spaces $f:X\to Y$ is called a \emph{$c$-rough isometry}
if for every $x,x'\in X$ we have
\[ \abs{d(x,x')-d(f(x),f(x'))}\le c\text{ for all $x,x'\in X$}. \]
The metric spaces $X$ and $Y$ are \emph{$c$-roughly isometric} if
there are $c$-rough isometries $f:X\to Y$, $g:Y\to X$ such that
$d(x,g(f(x)))\le c$ and $d(y,f(g(y)))\le c$ for every $x\in X$ and every $y\in Y$.
Furthermore, if there is a $c$-rough isometry $f:X\to Y$ such that $f(X)$ is $c$-dense
in $Y$, then $X$ and $Y$ are $2c$-roughly isometric.

A \emph{metric simplicial tree} $T=(V,E)$ is a simplicial tree
with vertices $V$ and edges $E$ endowed with a path
metric $d$ on (the geometric realization of) $T$
such that each edge $e\in E$ is isometric to a compact interval $[0,l_e]\subset\bbR$.

\begin{theorem}\label{thm:rough isometry to a tree}
	Let $G$ be a $\delta$-hyperbolic group. Let
	$\calW=\{w_{x,y}\}_{(x,y)\in G^2}$ be a full family of geodesics in $G$.
	For every $n\in\bbN$ there is a constant $c=c(\delta,n)>0$ such that for
	every $n$-tuple $Y\in G^n$ the subspace $[Y]_\calW$
	is $c$-roughly isometric to a metric simplicial tree.
\end{theorem}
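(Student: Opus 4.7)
The plan is to argue by induction on the size $n$ of the tuple $Y$. The base cases $n=1,2$ are trivial: $[Y]_\calW$ is either empty, a single point, or the vertex set of a single geodesic, and so embeds isometrically into a metric simplicial tree. For the inductive step, set $Y'=(y_0,\ldots,y_{n-2})$ and invoke the hypothesis to obtain a metric simplicial tree $T'$ and a $c(\delta,n-1)$-rough isometry $\phi'\colon [Y']_\calW \to T'$. The key geometric observation is that the $n-1$ new geodesics $w_{y_{n-1},y_i}$ for $i<n-1$ are all controlled by a single new one: applying $\delta$-slim triangles to the triangle $(y_{n-1},y_0,y_i)$ shows that every point of $w_{y_{n-1},y_i}$ lies within $\delta$ of $w_{y_{n-1},y_0}\cup w_{y_0,y_i}$, and $w_{y_0,y_i}\subset\calG(Y')$. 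Consequently $[Y]_\calW$ lies in the $\delta$-neighborhood of
\[
  S = [Y']_\calW \cup \bigl\{\text{vertices of }w_{y_{n-1},y_0}\bigr\},
\]
so it suffices to extend $\phi'$ to a rough isometry on $S$; passing from $S$ back to $[Y]_\calW$ costs only an additive $2\delta$ by sending each extra vertex to the image of a fixed $\delta$-neighbor in $S$.

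The delicate step is choosing the correct attachment point $p\in T'$ at which to glue a new metric simplicial arc representing $w_{y_{n-1},y_0}$. A naive attachment at $\phi'(y_0)$ fails because the resulting distance in the tree from $\phi'(y_i)$ to the image of $y_{n-1}$ would equal $d(y_0,y_i)+d(y_0,y_{n-1})$, overcounting the true distance $d(y_i,y_{n-1})$ by $2(y_i\mid y_{n-1})_{y_0}$, which can be arbitrarily large. Instead, by $\delta$-hyperbolicity the Gromov products $(y_{n-1}\mid y_i)_{y_0}$ satisfy the four-point condition with error $2\delta$, and together with $\phi'$ they localize, up to $O(\delta)$, a point $p\in T'$ that plays the role of the projection of a virtual copy of $y_{n-1}$ onto $T'$. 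I would attach at $p$ a metric simplicial arc of length $d(y_0,y_{n-1})-d_{T'}(p,\phi'(y_0))$ (suitably subdivided), and map the vertices of $w_{y_{n-1},y_0}$ by parametrizing outward from $y_{n-1}$ along the new arc and then along the segment in $T'$ from $p$ to $\phi'(y_0)$.

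The main obstacle is verifying that the extended map $\phi\colon S\to T$ is a rough isometry with constant $c(\delta,n)=c(\delta,n-1)+O(\delta)$. The cases where $u,v$ both lie in $[Y']_\calW$ or both lie on $w_{y_{n-1},y_0}$ are immediate from the inductive hypothesis and the isometric parametrization, respectively. The mixed case, with $u\in[Y']_\calW$ and $v$ on $w_{y_{n-1},y_0}$, is the delicate one: given such $u,v$, one applies $\delta$-slim triangles to the four-point configuration $(u,y_0,y_{n-1},y_i)$ for a well-chosen $y_i$ (namely, one near which $u$ already lies in $\calG$) to translate the geometric Gromov product $(u\mid y_{n-1})_{y_0}$ in $\calG$ into the analogous tree Gromov product through $p$, incurring only an additive $O(\delta)$ loss. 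Iterating the inductive step through all $n$ vertices yields $c(\delta,n)=O(n\delta)$, which suffices.
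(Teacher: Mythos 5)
Your strategy is viable but genuinely different from the paper's, and it leaves the hardest step at the level of a sketch. The paper does not build the approximating tree by hand: it first restricts attention to the star $\calG(Y)_0=\bigcup_{0<i<n}\im(w_{y_0,y_i})$ of geodesics issuing from $y_0$ and invokes a theorem of Coornaert, Delzant and Papadopoulos, which supplies a metric simplicial tree $T$ and a map $f\colon\calG(Y)_0\to T$ that is an isometry on each $\im(w_{y_0,y_i})$ and distorts distances by at most a constant $c'(\delta,n)$; it then extends $f$ to the remaining geodesics $w_{y_i,y_j}$ by exactly the slim-triangle observation you make (every point of $w_{y_i,y_j}$ is $\delta$-close to $w_{y_0,y_i}\cup w_{y_0,y_j}$). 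Your reduction of $[Y]_\calW$ to $[Y']_\calW\cup w_{y_{n-1},y_0}$ is the same mechanism; what your induction is re-proving is precisely the cited black box, i.e.\ the tree-approximation lemma itself. That is a legitimate and self-contained route, but it means the burden of proof sits exactly where your write-up is vaguest.

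Concretely, two points need to be supplied before the inductive step is a proof. First, the choice of the attachment point $p$ and the verification of the mixed case ($u\in[Y']_\calW$, $v$ on $w_{y_{n-1},y_0}$) are the entire content of the approximation lemma, and the proposal only gestures at them (``localize, up to $O(\delta)$, a point $p$'', ``for a well-chosen $y_i$''). You must check that a \emph{single} point $p$ works simultaneously for all indices $i$ --- this uses the four-point inequality for the whole family of Gromov products $(y_{n-1}\mid y_i)_{y_0}$ together with the fact that in a tree these products are realized by branch points --- and you must then treat arbitrary vertices $u\in[Y']_\calW$, not only the $y_i$ themselves, by reducing $d(u,v)$ to the quantities $d(u,y_0)$, $d(u,y_{n-1})$ and the parameter of $v$ along the geodesic. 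Second, the claimed recursion $c(\delta,n)=c(\delta,n-1)+O(\delta)$ is too optimistic: to locate $p$ inside $T'$ you must transport Gromov products from $\calG$ to $T'$ through $\phi'$, which costs an error of order $c(\delta,n-1)$, not $O(\delta)$, so the naive recursion yields a constant growing geometrically in $n$. This is harmless for the theorem as stated, since any constant depending only on $\delta$ and $n$ suffices, but the asserted $O(n\delta)$ bound is not what the argument as written delivers.
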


\begin{proof}
	Consider a $n$-tuple $Y=(y_0,\ldots,y_{n-1})\subset G$.
	Let $\calG$ be the Cayley graph of $G$. Let
	\begin{align*}
			\calG(Y)_0&=\bigcup_{0<i<n}\im(w_{y_0,y_i}),\\
			\calG(Y)  &=\bigcup_{0\le i<j<n}\im(w_{y_i,y_j}).
	\end{align*}
	Since the set of vertices in $\calG(Y)$, which is just $[Y]=[Y]_\calW$,
	is $1$-dense,
	if $\calG(Y)$ is $c$-roughly
	isometric to some metric space, then $[Y]$ is $(c+2)$-roughly isometric to the same
	space. We will now construct a rough isometry of $\calG(Y)$ to a metric simplicial tree.
	
	It is proved
	in~\cite{delzant}*{Th\'eor\`eme~1 on p.~91} that there is a constant
	$c'(\delta, n)>0$, which only depends on $\delta$ and $n$,
	and a map $f:\calG(Y)_0\to T$ to a metric simplicial tree $T$
	such that for each $0<i<n$ the restriction $f\vert_{\im(w_{y_0,y_i})}$ is a bijective
	isometry and
	\begin{equation}\label{eq:rough inequality}
		d(x,y)-c'(\delta,n)\le d(f(x),f(y))\le d(x,y)
	\end{equation}
	for all $x,y\in \calG(Y)_0$. Note that $f$ is automatically surjective.
	The unique geodesic segment between points $z$ and
	$z'$ of $T$ will be denoted by $[z,z']\subset T$.
	Next we extend $f$ to $\calG(Y)$ as follows: For every
	$x\in \calG(Y)\bs \calG(Y)_0$ choose $0<a(x)\le n-1$ and $0<b(x)\le n-1$ such that
	$x\in\im(w_{y_{a(x)},y_{b(x)}})$. Because of~\eqref{eq:rough inequality}
	we can pick a point $z\in [f(y_{a(x)}), f(y_{b(x)})]$ such
	that
	\begin{gather*}
		\abs{d\bigl(z, f(y_{a(x)})\bigr)-d\bigl(x, y_{a(x)}\bigr)}<c'(\delta, n),\\
		\abs{d\bigl(z, f(y_{b(x)})\bigr)-d\bigl(x, y_{b(x)}\bigr)}<c'(\delta, n).
	\end{gather*}
	Then set $f(x)=z$.

	To finish the proof, we show that
	$f:\calG(Y)\to T$ satisfies
	\begin{equation}\label{eq:rough final}
		\forall x,x'\in \calG(Y)\colon~ \abs{d(x,x')-d(f(x),f(x'))}\le c
	\end{equation}
	for
	\[
		c=4\delta+3c'(\delta,n).
	\]
	Let $x$ and $x'$ be points on the geodesics $w_{y_i,y_j}$
	and $w_{y_{i'},y_{j'}}$, where $i=a(x), j=b(x)$ and $i'=a(x'), j'=b(x')$.
	By $\delta$-hyperbolicity
	there is a point
	$z$ on $w_{y_0,y_i}$ or $w_{y_0,y_j}$ such that
	\begin{equation}\label{eq:distance since hyperbolic}
	d(x,z)<\delta.
	\end{equation}
	By the triangle inequality we obtain that
	\[
		\abs{d(y_i,x)-d(y_i,z)}<\delta~\text{ and }~
		\abs{d(y_j,x)-d(y_j,z)}<\delta.
	\]
	Thus,
	\begin{align}\label{eq:rough distance}
		\abs{d(f(y_i),f(x))-d(f(y_i),f(z))}&<\delta+c'(\delta,n),\\
		\abs{d(f(y_j),f(x))-d(f(y_j),f(z))}&<\delta+c'(\delta,n).\notag
	\end{align}
	Since $T$ is tree and $f(x)\in [f(y_i),f(y_j)]$, either $f(x)\in [f(y_i),f(z)]$, or
	$f(x)\in [f(z), f(y_j)]$. In both cases~\eqref{eq:rough distance} implies that
	\begin{equation}\label{eq:f distance 1}
		d(f(x),f(z))<\delta+c'(\delta,n).
	\end{equation}
	Similarly, we find a point $z'$ on $w_{y_0,y_{i'}}$ or $w_{y_0,y_{j'}}$ such that
	\begin{equation}\label{eq:f distance 2}
		d(x',z')<\delta~\text{ and }~d(f(x'),f(z'))<\delta+c'(\delta,n).
	\end{equation}
	From~\eqref{eq:f distance 1},~\eqref{eq:f distance 2},~\eqref{eq:distance since hyperbolic}
	and the fact that $f$ is a $c'(\delta,n)$-rough isometry on $\calG(Y)_0$ we obtain that
	\begin{align*}
		d(x,x') \le d(z,z')+2\delta &\le d(f(z),f(z'))+2\delta+c'(\delta,n)\\
			&\le d(f(x),f(x'))+4\delta+3c'(\delta,n).
	\end{align*}
	Similary, we get
	\[
		d(x,x') \ge d(f(x),f(x'))+4\delta+3c'(\delta,n),
	\]
    which proves~\eqref{eq:rough final}.
\end{proof}

\subsection{An efficient chain contraction of the Rips complex to a tree} 
\label{sub:an_efficient_chain_contraction_of_the_rips_complex_of_a_tree}


\begin{lemma}\label{lem:tree-homotopy}
For every $r\ge 1$ and every $n\in\bbN$ there is a constant $e(r,n)>0$
with the following property:
Let $T$ be a metric simplicial tree. Let $V$ be a subset of the vertices of $T$ such
that the distance of any two distinct
vertices in $V$ is at least $1$.
Then there is a chain contraction $h_i^T:C^r_i(V)\to C^r_{i+1}(V)$, $i\ge -1$,
of the augmented chain complex $C^r_\ast(V)$
such that
\begin{equation}\label{eq:operator norm}
\norm{h_i^T}<e(r,i)~\text{ for every $i\ge 1$},
\end{equation}
where the operator norm is taken with respect to the $\ell^1$-norms.
\end{lemma}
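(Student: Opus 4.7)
The plan is to construct $h_\ast^T$ by induction on $i$, exploiting the $0$-hyperbolicity of $T$ to keep each $h_i^T(\sigma)$, for $i\geq 1$, supported on a subset of $V$ of cardinality bounded purely in terms of $r$ and $i$. First I fix a basepoint $v_\ast\in V$ and set $h_{-1}^T(1)=v_\ast$. For $h_0^T$, given $v\in V$ I enumerate $V$-vertices $v_\ast=u_0,u_1,\ldots,u_m=v$ along the unique geodesic from $v_\ast$ to $v$ in $T$ with consecutive pairs at distance $\leq r$, chosen canonically (for instance, greedy maximal jumps of length $\leq r$), and set $h_0^T(v)=\sum_{j=0}^{m-1}(u_j,u_{j+1})$, which gives $dh_0^T(v)=v-v_\ast$. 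Although $h_0^T$ has unbounded operator norm (none is demanded at degree $0$), canonicity of the subdivision ensures that the difference $h_0^T(v)-h_0^T(u)$ telescopes along the merged path: for a $1$-simplex $\sigma=(u,v)$ with $d(u,v)\leq r$ this difference is supported on $V$-vertices of the short geodesic $[u,v]\subset T$, of $\ell^1$-norm bounded purely in terms of $r$.

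The geometric input for the inductive step is that the convex hull $K_\sigma\subset T$ of a generator $\sigma=(v_0,\ldots,v_i)\in C_i^r(V)$ is a finite subtree of diameter $\leq r$ with total length bounded in terms of $r$ and $i$, so by $1$-separation of $V$ the set $V\cap K_\sigma$ has cardinality bounded by some $N(r,i)$. Every pair in $V\cap K_\sigma$ lies within $r$, hence the full simplex on $V\cap K_\sigma$ is a subcomplex of $C_\ast^r(V)$, augmented-acyclic, with the elementary cone-to-basepoint chain contraction of operator norm $1$ in the $\ell^1$-norm. Assuming inductively that $h_{i-1}^T$ is constructed with $\|h_{i-1}^T\|\leq e(r,i-1)$ for $i\geq 2$ (using the telescoping observation above to bound $\|h_0^T(d\sigma)\|$ in terms of $r$ when $i=1$), and with the support of each $h_{i-1}^T(d_j\sigma)$ localized to a bounded tree-neighborhood of $K_\sigma$, the cycle $c_\sigma:=\sigma-h_{i-1}^T(d\sigma)$ has $\ell^1$-norm at most $1+(i+1)e(r,i-1)$ and is supported on a bounded set $W_\sigma\subset V$ of cardinality depending only on $r$ and $i$. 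I would then define $h_i^T(\sigma)$ as the image of $c_\sigma$ under the cone contraction of the full simplex on $W_\sigma$, giving $\|h_i^T(\sigma)\|\leq 1+(i+1)e(r,i-1)=:e(r,i)$.

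The main obstacle is coherence: the simplex-by-simplex recipe must assemble into a well-defined $\bbZ$-linear map, and one must verify inductively that the support of $h_{i-1}^T(d\sigma)$ indeed remains in a bounded tree-neighborhood of $K_\sigma$, rather than spilling into distant parts of $T$ and destroying the uniform $\ell^1$-bound. I would handle both points by fixing once and for all a rooting of $T$ and, for each finite subset $U\subset V$ lying in a bounded subtree, the canonical cone contraction of the full simplex on $U$ obtained by coning to the $U$-vertex closest to the root. The induction is then organized as an acyclic-carrier argument, with carrier on $\sigma$ equal to the full simplex on a suitable bounded tree-neighborhood of $K_\sigma$ (enlarged as dictated by the previous level). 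The canonical choice enforces $\bbZ$-linearity, and the acyclic-carrier formalism propagates the uniform $\ell^1$-bound from level to level.
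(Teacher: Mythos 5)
Your proposal follows essentially the same route as the paper's proof: the basepoint cone $h_{-1}^T,h_0^T$ along geodesics, the telescoping of $h_0^T\circ d$ on $1$-simplices to get the base norm bound, and the inductive coning of the cycle $\sigma-h_{i-1}^Td\sigma$ with the recursion $e(r,i)=(i+1)\,e(r,i-1)+1$. The only real difference is that the paper resolves your ``main obstacle'' more directly: it takes \emph{all} $V$-vertices on the geodesic in $h_0^T$ and cones $\sigma-h_{i-1}^Td\sigma$ to the first vertex $v_0$ of $\sigma$ itself (well-definedness is automatic since the ordered tuples form a free basis), so that by induction $\supp\bigl(h_i^T(\sigma)\bigr)\subset\conv(\sigma)$ exactly --- and since convex hulls in a tree do not increase diameter, the image lands in $C^r_{i+1}(V)$ with no enlargement of the Rips parameter, whereas your greedy subdivision and enlarged carriers $W_\sigma$ would a priori only give $C^{r'}_{i+1}(V)$ for some $r'>r$.
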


Here we mean by the \emph{augmented} chain complex the complex $C_\ast^r(V)$
extended by $C_{-1}^r(V)=\bbZ$ and
the differential (\emph{augmentation}) $d:C_0^r(V)\to\bbZ$ that maps every $v\in V$
to $1\in\bbZ$.

\begin{proof}
	Fix a base point $x\in V$. Let $h^T_{-1}$ be defined by $h^T_{-1}(1)=x$. For every
	$v\in V$ we define $h^T_0$ by
	\[
		h^T_0(v)=\sum_{i=0}^{m-1} (v_i, v_{i+1}),
	\]
	where $x=v_0,v_1,\ldots,v_m=v$ (in that order) are the vertices in $V$ lying
	on the unique geodesic
	from $x$ to $v$. It is clear that
	\[
		dh_0^T(v)=v-x=(\id-h_{-1}^Td)(x).
	\]
	For $v\in V$ and $i\ge 0$ consider the linear map given by
	\[
		c_v\colon C_i(V)\to C_{i+1}(V),~c_v(v_0,\ldots, v_i)=(v,v_0,\ldots, v_i).
	\]
	One verifies that for $i\ge 1$, $v\in V$, and $(v_0,\ldots,v_i)\in V^{i+1}$ we have
	\[
		dc_v(v_0,\ldots,v_i)=(\id - c_v d)(v_0,\ldots,v_i).
	\]
	We define the homomorphisms
	$h^T_i\colon C_i^r(V)\to C_{i+1}(V)$ for $i=1,2,\ldots$ inductively by
	\begin{equation}\label{eq:inductive definition of homotopy}
		h^T_i(v_0,\ldots, v_i)=c_{v_0}\bigl((\id-h^T_{i-1}d)(v_0,\ldots, v_i)\bigr).
	\end{equation}
	It follows inductively from the following computation that
	$h^T_\ast$ is a chain contraction:
	\begin{align*}
		dh^T_{i+1}(v_0,\ldots,v_{i+1})&= dc_{v_0}(\id-h^T_i d)(v_0,\ldots,v_{i+1})\\
			&= (\id-c_{v_0}d)(\id-h^T_i d)(v_0,\ldots,v_{i+1})\\
			&= (\id-h^T_id-c_{v_0}d+c_{v_0}dh^T_id)(v_0,\ldots,v_{i+1})\\
			&= (\id-h^T_id-c_{v_0}d+c_{v_0}(\id-h^T_{i-1}d)d)(v_0,\ldots,v_{i+1})\\
			&= (\id-h^T_id)(v_0,\ldots, v_{i+1}).
	\end{align*}
	Next we define $e(r,i)$ and show that~\eqref{eq:operator norm} holds by induction.
	Set $e(r,1)=r+1$. Let $(u,v)\in C_1^r(V)$ be a $1$-simplex. Let
	$v=z_0,z_1,\ldots, z_m=u$ (in that order) be the vertices in $V$ lying
	on the unique geodesic from $v$ to $u$.
	Since $T$ is a tree, we get that
	\begin{equation}\label{eq:h_0d}
		h_0^T d(u,v)=\sum_{k=0}^{m-1} (z_k,z_{k+1}).
	\end{equation}
	Since the distance from $u$ to $v$ is $\le r$ and the distance
	from $z_k$ to $z_{k+1}$ is $\ge 1$ by assumption,
	we have $m\le r$ and thus
	$\norm{h_0d(u,v)}_1\le r$. This implies $\norm{h_1}\le e(r,1)$.
	For $i\ge 2$ set
	\[
		e(r,i)=e(r,i-1)\cdot (i+1)+1.
	\]
	Because of $e(r,1)=r+1$ one sees that $e(r,i)$ only depends on $r$ and $i$, but
	not on the specific tree $T$. Definition~\eqref{eq:inductive definition of homotopy}
	and the fact that the differential in degree $i$ has norm at most $i+1$
	yield~\eqref{eq:operator norm}.
	
	Finally we prove that $\im(h_i^T)\subset C_{i+1}^r(V)$.
	It suffices to show that
	for every $i\ge 1$ and every $(v_0,\ldots, v_i)\in C_i^r(V)$ we have
	\begin{equation}\label{eq:support}
		\supp\bigl(h_i^T(v_0,\ldots,v_i)\bigr)\subset\conv(v_0,\ldots,v_i).
	\end{equation}
	Here the \emph{support} $\supp(s)$ of an element $s\in C_i(V)$, which can be uniquely
	written as a linear combination of $(i+1)$-tuples in $V^{i+1}$, is the union of all
	$v\in V$ that appear in one of these $(i+1)$-tuples. We denote the \emph{convex hull}
	of a set $S\subset V$ by $\conv(S)$.
	We have~\eqref{eq:support} for $i=1$ by
	definition~\eqref{eq:inductive definition of homotopy} and because all the points
	$z_i$ in~\eqref{eq:h_0d} on the geodesic from $u$ to $v$.
	If~\eqref{eq:support} holds for $h_i^T$ with $i\ge 1$, it is true for $h_{i+1}^T$
	because of:
	\begin{align*}
		\supp \bigl(h_{i+1}^T(v_0,\ldots, v_{i+1})\bigr) &
		    \subset \{v_0,\ldots,v_{i+1}\}\cup\bigcup_{k=0}^{i+1}\supp\bigl( h_i^T(v_0,\ldots,\widehat v_k,\ldots,v_{i+1})\bigr)\\
		&\subset \{v_0,\ldots,v_{i+1}\}\cup\bigcup_{k=0}^{i+1}\conv(v_0,\ldots,\widehat v_k,\ldots,v_{i+1})\\
		&=\conv(v_0,\ldots,v_{i+1}).\qedhere
	\end{align*}
\end{proof}

\subsection{Proof of Theorem~\ref{bounded-chian-map}} 
\label{sub:proof_of_theorem_bound}

\begin{proof}
	Choose a full family $\calW=\{w_{x,y}\}_{(x,y)\in G^2}$
	of geodesics in $G$ that is \mbox{$G$-equi}\-variant in the sense that
	for all $x,y,g\in G$ we have $gw_{x,y}=w_{gx,gy}$.
		
	For a $k$-tuple $Y\in G^k$ we write
	$[Y]$ instead of $[Y]_\calW$ in the sequel.
	For $i=0,1,\ldots$ we
	define inductively real numbers $r(i)\ge 1$ and $\bbZ G$-homomorphisms
	$f_i\colon C_i(G)\to C^{r(i)}_i(G)$ for $i\in\bbN$ such that
	\begin{enumerate}[label=\alph*)]
	\item $f_0$ is the identity,
	\item $d f_i=f_{i-1} d$,
	\item $f_i$ is bounded when endowing the source with the Sobolev and the target
	with the $\ell^1$ norm, and
	\item for every $(g_0,\ldots,g_i)\in C_i(G)$ we have
	\[f_i\bigl((g_0,\ldots,g_i)\bigr)\in C^{r(i)}_i\bigl([(g_0,\ldots,g_i)]\bigr).\]
	\end{enumerate}

    The theorem follows from a)-c).
	Property d) is just needed for running the induction argument.

	The basis of the induction will be an explicit construction of $f_0$ and $f_1$.
	We set $r(0)=r(1)=1$. Define $f_0$ to be the identity map.
    If for $x,y\in G$ the points $x=z_0,z_1,\ldots,z_d=y$ are the successive vertices on the
	geodesic $w_{x,y}$ from $x$ to $y$, we
	define $f_1$ by
	\[ f_1\bigl((x,y)\bigr)=\begin{cases} \sum_{i=1}^d (z_{i-1},z_i) & \text{ if $i\ge 1$,}\\
	                          (x,y) & \text{ if $i=0$ and~$x=y$}.
				\end{cases}\]
	It is clear that $f_0$ and $f_1$ respect all the properties above.

	Fix $i\ge 1$ and assume $f_j$ is already defined for $0\le j\le i$
	satisfying a)--d).
	According to Theorem~\ref{thm:rough isometry to a tree} there is a constant
	$c(i)>0$ such that for every $Y\in G^{i+1}$ the
	subspace $[Y]$ is $c(i)$-roughly isometric to metric simplicial tree.
	We set
	\[
		r(i+1)=r(i)+2c(i).
	\]
	By Lemma~\ref{lem:tree-homotopy}
	for every $n\in\bbN$ there is a constant $e(i,n)>0$ such that for every
	metric simplicial tree $T$ with a
	subset $V$ of vertices whose distinct elements have distance $\ge 1$
	from each other
	there is a chain contraction
	\[h_\ast^T\colon C_\ast^{r(i)+c(i)}(V)\to C_{\ast+1}^{r(i)+c(i)}(V)\]
	of the augmented chain complex
	such that
	the operator norm with respect to the $\ell^1$-norm satisfies
	$\norm{h_n}<e(i,n)$. Let
	\[B=\bigl\{(e,g_1,\ldots,g_{i+1});~\text{$g_k\in G$ for $1\le k\le i+1$}\bigr\}.\]
	Note that $B$ is $\bbZ G$-basis of $C_{i+1}(G)$.
	After some preparation
	we define $f_{i+1}(\sigma)\in C_{i+1}^{r(i+1)}(G)$ in~\eqref{eq:defining f}
	for every $\sigma\in B$
	such that
	\begin{enumerate}[label=\alph*), start=5]
		\item $df_{i+1}(\sigma)=f_id(\sigma)$,
		\item $\norm{f_{i+1}(\sigma)}_1\le \bigl(e(i,i+1)+(i+1)\bigr)\norm{f_i}(i+2)\norm{\sigma}_{1,1}$, and
		\item $f_{i+1}(\sigma)\in C_{i+1}^{r(i+1)}\bigl([\sigma]\bigr)$
	\end{enumerate}
	hold for every $\sigma\in B$. The theorem then follows from the following easy claim,
	which we leave to the reader.
	
	\emph{Claim.} The $\bbZ G$-linear extension
	to $C_{i+1}(G)\to C_{i+1}^{r(i+1)}(G)$ of a map
	$f_{i+1}\colon B\to C_{i+1}^{r(i+1)}(G)$ satisfying e)--g)
	satisfies b)--d). The extension
	$f_{i+1}\colon C_{i+1}(G)\to C_{i+1}^{r(i+1)}(G)$ has
	operator norm
	\[
		\norm{f_{i+1}}\le  \bigl(e(i,i+1)+(i+1)\bigr)\norm{f_i}(i+2).
	\]
	\indent Let $\sigma\in B$. Let $T^\sigma$ be a metric
	simplicial tree such that
	$[\sigma]$ is $c(i)$-roughly isometric to $T^\sigma$. Let $V^\sigma$
	be a set of points of $T^\sigma$ such that any two distinct
	points in $V^\sigma$ have distance $\ge 1$ and $V^\sigma$ is $3$-dense
	in $T^\sigma$. By subdividing $T^\sigma$ we may assume that $V^\sigma$
	consists of vertices.
	Upon increasing
	$c(i)$ by $6=2\cdot 3$, thus $r(i+1)$ by $12=2\cdot 6$, we may and will assume that
    $[\sigma]$ is also $c(i)$-roughly isometric to $V^\sigma$.
	Let
	\begin{equation*}
	\phi^\sigma\colon [\sigma]\to V^\sigma~\text{ and }~
	\psi^\sigma\colon V^\sigma\to [\sigma]
	\end{equation*}
	two $c(i)$-rough isometries such that
	\[d(\phi^\sigma \psi^\sigma,\id_{V^\sigma})\le c(i)~\text{ and }~
	  d(\psi^\sigma \phi^\sigma,\id_{[\sigma]})\le c(i).
	\]
	The maps $\phi^\sigma$ and $\psi^\sigma$ induce chain maps
	\[\phi_\ast^\sigma\colon C_\ast^{r(i)}\bigl([\sigma]\bigr)
	\to C_\ast^{r(i)+c(i)}(V^\sigma)
	~\text{ and }~
	\psi_\ast^\sigma\colon C_\ast^{r(i)+c(i)}(V^\sigma)\to
	C_\ast^{r(i+1)}\bigl([\sigma]\bigr).
	\]
	The following claim follows by a straightforward computation.
	
	\emph{Claim.} Let $r>0$.
		The map $h_\ast^{\sigma}\colon
		C_\ast^{r(i)}\bigl ([\sigma]\bigr)\to
		C_{\ast+1}^{r(i+1)}([\sigma])$ defined by
		\[
			h_n^\sigma(g_0,\ldots,g_n)=\sum_{k=0}^n(-1)^k\bigl(g_0,\ldots,g_k,\psi^\sigma\phi^\sigma(g_k),\ldots, \psi^\sigma\phi^\sigma(g_n)\bigr)
		\]
		is a chain homotopy between the composition
		$\psi^\sigma_\ast\phi^\sigma_\ast\colon C_\ast^{r(i)}([\sigma])
		\to C_\ast^{r(i+1)}([\sigma])$ and the identity, that is,
		$\psi^\sigma_n\phi^\sigma_n-\id=dh_n^\sigma+h_{n-1}^\sigma d$
		for every $n\ge 0$, where we set $h_{-1}^\sigma=0$.
				
	For $\sigma\in B$ define now
	\begin{equation}\label{eq:defining f}
		f_{i+1}(\sigma)=\psi_{i+1}^\sigma h_i^{T_\sigma}\phi^\sigma_if_i d(\sigma)-
		h_i^\sigma f_i d(\sigma)\in C_{i+1}^{r(i+1)}([\sigma]).
	\end{equation}
	Property g) is clear from the definitions. The differential in degree $(i+1)$
	of $C_\ast(G)$ (endowed with the Sobolev norm) has norm at most $(i+2)$,
	and $h_i^\sigma$ has norm at most $(i+1)$ (with respect to the $\ell^1$-norms).
	The maps $\phi_i^\sigma$ and $\psi_i^\sigma$ have norm at most $1$. Hence
    we obtain that
	\[
		\norm{f_{i+1}(\sigma)}_1\le \bigl(e(i,i+1)+(i+1)\bigr)\norm{f_i}(i+2)\norm{\sigma}_{1,1}.
	\]
	Property e) follows from:
	\begin{align*}
		df_{i+1}(\sigma) &=
		d\bigl( \psi_{i+1}^\sigma h_i^{T_\sigma}\phi_i^\sigma f_id-h_i^\sigma f_i d\bigr)(\sigma)\\
		&= \bigl(\psi_i^\sigma d h_i^{T_\sigma}\phi_i^\sigma f_id-dh_i^\sigma f_i d\bigr)(\sigma)\\
		&= \bigl( \psi_i^\sigma(\id-h_{i-1}^{T_\sigma}d)\phi_i^\sigma f_i d-(\psi_i^\sigma\phi_i^\sigma-\id-h_{i-1}^\sigma d)f_i d\bigr)(\sigma)\\
		&=\bigl(f_id -\psi_i^\sigma h_{i-1}^{T_\sigma} d\phi_i^\sigma f_id\bigr)(\sigma)+\bigl(h_{i-1}^\sigma f_{i-1} dd\bigr)(\sigma)\\
		&=\bigl(f_id -\psi_i^\sigma h_{i-1}^{T_\sigma} \phi_{i-1}^\sigma f_{i-1} dd\bigr)(\sigma)\\
		&=f_i d(\sigma).\qedhere
	\end{align*}
\end{proof}

\section{Integrable measure equivalence and simplicial volume}
\label{sec:_ell_1_measure_equivalence_and_simplicial_volume}

\subsection{Integrable measure equivalence} 
\label{sub:_ell_1_measure_equivalence}

We recall the central notion of \emph{measure equivalence} which
was suggested by Gromov~\cite{gromov-invariants}*{0.5.E}.

\begin{definition}
	Two countable groups ${G}$, ${H}$ are called \emph{measure equivalent}
	if there is a standard measure space $(\Omega, \mu)$ with commuting
	$\mu$-preserving ${G}$- and ${H}$-actions,
	such that each one of the actions admits
	a finite measure fundamental domain.
	The space $(\Omega, \mu)$ endowed with these actions is called an \emph{\mbox{ME-coupling}}
	of ${G}$ and ${H}$.
\end{definition}

Given measure equivalent groups $G$ and $H$, an actual choice of fundamental domains is not a part of the structure
of an ME-coupling of $G$ and $H$.
But it is easy to see that the \emph{measures} of $G$- and
$H$-fundamental domains are independent of the choice.
So for an ME-coupling $(\Omega, \mu)$ of $G$ and $H$,
the ratio $c_{\Omega}=\mu(X_H)/\mu(X_G)$ of the measure of an $H$-fundamental domain by the
measure of a $G$-fundamental domain is well defined and called
the {\em coupling index} of $\Omega$.

The map $X_G\hookrightarrow\Omega\twoheadrightarrow G\bs\Omega$ is a measure isomorphism.
Since $H$ acts on $G\bs\Omega$, this identification induces a measurable action of $H$
on $X_G$, for which we use the dot notation $h\cdot x$ for $h\in H$ and $x\in X_H$ to
distinguish it from the action $hx$ of $H$ on $\Omega$. Similarly for $X_H$.

The coupling $\Omega$ is called
\emph{ergodic} if the $H$-action on $G\bs\Omega$ is ergodic,
or equivalently, the $G$-action on $H\bs\Omega$ is ergodic~\cite{furman}*{Lemma~2.2}.

\begin{definition}\label{def:coycles}
	Let $(\Omega,\mu)$ be an ME-coupling of $G$ and $H$. Let $X_G\subset\Omega$ and
	$X_H\subset\Omega$ be fundamental domains of the $G$- and $H$-action, respectively.
	\begin{enumerate}
		\item We define $\alpha_{X_H}$ as
		\[
			\alpha_{X_{H}}\colon{G}\times X_{H}\to{H},~({g},x)\mapsto {h}
			\text{ with ${g} x\in{h}^{-1} X_{H}$},
		\]
		and call $\alpha_{X_H}$ the \emph{(measurable) cocycle associated to $X_H$}.
		Similarly for $\alpha_{X_G}$.
		\item Assume that $H$ is finitely generated, and let $l\colon{H}\to\bbN$ be the
		length function associated to some word-metric on ${H}$. We
		say that the fundamental domain $X_H$ is \emph{integrable}
		if the function $x\mapsto l(\alpha_{X_H}(g,x))$ is in $L^1(X_H)$
		for every $g\in G$. Similarly for $X_G$.
	\end{enumerate}
\end{definition}

\begin{definition}\label{def:l1-measure coupling}
	Let ${G}$ and ${H}$ be finitely generated.
	We say that an ME-coupling of ${G}$ and ${H}$ is an \emph{$\ell^1$-ME-coupling} or
	an \emph{integrable ME-coupling}
	if it admits integrable ${G}$- and ${H}$-fundamental domains. We say that
	$G$ and $H$ are \emph{$\ell^1$-measure equivalent} if there exists an $\ell^1$-ME-coupling
	of $G$ and $H$.
\end{definition}

\begin{remark}
	Measure equivalence and $\ell^1$-measure equivalence are equivalence
	relations on countable and finitely generated groups, respectively
	(see~\cites{furman, mostow+me}).
\end{remark}

\begin{remark}
A locally compact group $G$ with its Haar measure is an ME-coupling for all its
lattices; it is an integrable ME-coupling for
every pair of cocompact lattices in $G$.
	
	By~\cite{shalom}*{Theorem~3.6} the isometry group $\operatorname{Isom}(\bbH^n)$ of the $n$-dimensional hyperbolic space with $n\ge 3$ endowed with its Haar measure is
	an $\ell^1$-ME-coupling for all its lattices.
	Shalom in \emph{loc.~cit.} was concerned with $\ell^2$-integrability
	and showed  that all lattices in simple Lie groups
	not locally
	isomorphic to $\isom(\bbH^2)\cong PSL_2(\bbR)$ or
	$\isom(\bbH^3)\cong PSL_2(\bbC)$
	are \emph{$\ell^2$-integrable}.
	However, his proof also implies the above statement.
\end{remark}

\subsection{Bounded cohomology and ME-induction}
\label{sub:bounded_cohomology_and_me_induction}

We briefly recollect basic notions of \emph{bounded cohomology}.

Let ${G}$ be a discrete group and $E$ be a real Banach space with
isometric ${G}$-action.
We denote by $\rmCb^k({G},E)$ the
Banach space $L^\infty({G}^{k+1},E)$ consisting of
bounded maps from ${G}^{k+1}$ to $E$ endowed with the supremum norm
and the isometric ${G}$-action:
\begin{equation*}
  ({g}\cdot f)({g}_0,\dots,{g}_k)={g}\cdot f({g}^{-1}{g}_0,\dots,{g}^{-1}{g}_k).
\end{equation*}
The sequence of Banach ${G}$-modules $\rmCb^k({G}, E)$, $k\ge
0$, becomes a chain complex of Banach ${G}$-modules via the
standard homogeneous coboundary operator
\begin{equation*}\label{eq:homogeneous differential}
  d(f)({g}_0,\dots,{g}_k)=\sum_{i\ge 0}^k(-1)^i
  f({g}_0,\dots, \hat{{g}_i},\dots,{g}_k).
\end{equation*}
The \emph{bounded cohomology} $\rmHb^\ast({G},E)$ of ${G}$ with
coefficients $E$ is the cohomology of the complex of
${G}$-invariants $\rmCb^\ast({G}, E)^G$.  The bounded cohomology
$\rmHb^\ast({G},E)$ inherits a semi-norm from $\rmCb^\ast({G},
E)$: The \emph{(semi-)norm} of an element $x\in \rmHb^k({G}, E)$ is
the infimum of the norms of all cocycles in the cohomology class $x$.

The topological dual of the complex of Banach spaces
\[C_\ast^{(1)}({G})\widehat{\otimes}_{\bbZ G}E\cong
C_\ast^{(1)}({G},\bbR)\widehat{\otimes}_{\bbR G} E,\]
whose homology $\rmHl_\ast({G},E)$ is the so-called \emph{$\ell^1$-homology
of $G$ with coefficents $E$}
(compare Remark~\ref{rem:recover projective tensor product}),
is canonically isomorphic to
$\rmCb^\ast({G}, E')$ (see~\cite{monod-book}*{Prop.~2.3.1 on p.~20}).
Thus, we obtain a natural pairing, which
descends to (co-)homology (both pairings are denoted by $\langle\_,\_\rangle$):
\begin{equation*}\label{eq:pairing on homology level}
  \langle\_,\_\rangle\colon: \rmHb^k({G},E')\otimes\rmHl_k({G},E)\to\bbR.
\end{equation*}

In the next theorem we identify the set $Hx\cap X_H$ which consists of just one element with the element itself.

\begin{theorem}[Monod-Shalom]\label{thm:induction from Monod-Shalom}
	Let $(\Omega,\mu)$ be a ME-coupling of ${G}$ and ${H}$. Let
	$X_{G}$ and $X_{H}$ be measurable fundamental domains for the ${G}$- and
	${H}$-action on $\Omega$, respectively.
	Let $\alpha\colon {H}\times X_{G}\to{G}$ be the cocycle associated
	to $X_{G}$.
	The maps 	
	\begin{gather}\label{eq:induction map on invariants}
		\alpha^\ast\colon\rmCb^\ast({G},\rmL^\infty(X_{H},\bbR))\to\rmCb^\ast({H},\rmL^\infty(X_{G},\bbR))\\
		\alpha^k f({h}_0,\dots,{h}_k)(x)=		f\bigl(\alpha({h}_0^{-1},x)^{-1},\dots,\alpha({h}_k^{-1},x)^{-1}\bigr)({H} x\cap X_{H})\notag
	\end{gather}
	define
	a chain map that restricts to the invariants
	\begin{equation*}
		\alpha^\ast\colon\rmCb^\ast({G},\rmL^\infty(X_{H},\bbR))^{G}
		\to
		\rmCb^\ast({H},\rmL^\infty(X_{G},\bbR))^{H}
	\end{equation*}
	and induces an isometric isomorphism
	\[\rmHb^\ast(\alpha)\colon \rmHb^\ast({G},\rmL^\infty(X_{H},\bbR))
	\xrightarrow{\cong}\rmHb^\ast({H},\rmL^\infty(X_{G},\bbR)).\]
	in cohomology. The map $\rmHb^\ast(\Omega)$ given by the commutative diagram
	\[\xymatrix{
		\rmHb^\ast({G},\rmL^\infty({H}\bs\Omega,\bbR))\ar[d]^\cong\ar[r]^{\rmHb^\ast(\Omega)} &
			\rmHb^\ast({H},\rmL^\infty({G}\bs\Omega,\bbR))\\
		\rmHb^\ast({G},\rmL^\infty(X_{H},\bbR))
		\ar[r]^{\rmHb^\ast(\alpha)}& \rmHb^\ast({H},\rmL^\infty(X_{G},\bbR))\ar[u]^\cong}
	\]
	where the vertical isomorphisms are induced by the (restrictions) of the
	projections $X_{H}\to{H}\bs\Omega$ and
	$X_{G}\to{G}\bs\Omega$, respectively, does not depend
	on the choices of fundamental domains.
\end{theorem}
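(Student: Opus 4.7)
The plan is to follow the Monod--Shalom induction strategy in three stages.

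First I would handle the elementary verifications. The pointwise bound $|\alpha^k f(h_0,\ldots,h_k)(x)| \le \|f\|_\infty$ gives $\|\alpha^*\| \le 1$, and commutation with the homogeneous coboundary $d$ is automatic because $d$ only reshuffles simplicial variables without interacting with the cocycle evaluation. That $\alpha^*$ restricts to a map on invariants comes from the cocycle identity $\alpha(h_1 h_2, x) = \alpha(h_1, h_2 \cdot x)\, \alpha(h_2, x)$ for $\alpha_{X_G}$, combined with the defining relation $h \cdot x = \alpha(h, x)\, h x$ for the $H$-dot-action on $X_G$; the placement of inverses in the definition of $\alpha^k f$ is engineered precisely so that the $G$-equivariance condition $f(g g_0, \ldots, g g_k) = g \cdot f(g_0, \ldots, g_k)$ on $f$ translates into the analogous $H$-equivariance condition on $\alpha^* f$.

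Second, for the isomorphism on bounded cohomology I would construct a symmetric map $\beta^*$ from the cocycle $\alpha_{X_H}$ going in the reverse direction, and establish that both compositions $\alpha^* \circ \beta^*$ and $\beta^* \circ \alpha^*$ are chain-homotopic to the identity. The cleanest implementation uses a common resolution: both invariant complexes receive maps from $L^\infty(\Omega^{*+1}, \bbR)^{G \times H}$, which by Monod's framework is a relatively injective strong resolution computing the relevant bounded cohomology for each of $G$ and $H$, using that the actions are measure-preserving with finite-measure fundamental domains and the identifications $L^\infty(X_H) \cong L^\infty(H \backslash \Omega)$, $L^\infty(X_G) \cong L^\infty(G \backslash \Omega)$. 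Under these identifications $\alpha^*$ becomes the comparison between two essentially equivalent ways of representing cochains on $\Omega$, and is an isomorphism by a direct change-of-variables argument using the defining relation of the cocycle.

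Third, the isometry follows formally: both $\alpha^*$ and $\beta^*$ have operator norm at most $1$ at the cochain level, so their induced maps on semi-normed bounded cohomology have norm at most $1$; being mutually inverse, they must be isometries. Independence of $\rmHb^*(\Omega)$ from the choice of fundamental domains comes from the fact that two cocycles associated to different fundamental domains differ by a measurable $1$-coboundary (conjugation by a measurable map into the group), inducing chain-homotopic pullback maps on the invariant complexes; combined with the canonicity of the projections $X_G \to G \backslash \Omega$ and $X_H \to H \backslash \Omega$ as measure isomorphisms, this makes the diagram well-defined and commutative. The main obstacle is the second stage — verifying that $L^\infty(\Omega^{*+1}, \bbR)^{G \times H}$ really is a resolution computing both bounded cohomologies — which is the technical heart of the Monod--Shalom argument and relies on Monod's full framework of relative homological algebra for dual Banach modules.
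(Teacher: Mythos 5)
Your proposal is essentially a reconstruction of the Monod--Shalom induction argument, whereas the paper's own ``proof'' of this theorem consists of a single citation: it is Proposition~4.6 of Monod--Shalom (with $S=\Omega$, $E=\bbR$), and the isometry statement is extracted from Theorem~7.5.3 of Monod's book. So the comparison is really between your sketch and the cited literature. Your first stage (norm $\le 1$, commutation with the homogeneous coboundary, restriction to invariants via the cocycle identity $\alpha(h_1h_2,x)=\alpha(h_1,h_2\cdot x)\alpha(h_2,x)$, which is indeed correct for the paper's conventions $hx\in\alpha(h,x)^{-1}X_G$) matches the routine part of that proof. Your second stage correctly identifies the technical heart --- that $\rmL^\infty(\Omega^{\ast+1},\bbR)$, with its $G\times H$-action, is a relatively injective strong resolution computing $\rmHb^\ast(G,\rmL^\infty(H\bs\Omega))$ and $\rmHb^\ast(H,\rmL^\infty(G\bs\Omega))$ simultaneously --- and, like the paper, you defer this to Monod's relative homological algebra; that is an acceptable division of labour, though it means your proposal, like the paper's, is not self-contained at exactly the same point. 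Two small remarks: your formal derivation of the isometry (mutually inverse maps of norm $\le 1$ on cohomology) is a clean alternative to invoking the isometry clause of Monod's Theorem~7.5.3, but it genuinely requires constructing the reverse map $\beta^\ast$ from $\alpha_{X_H}$ and verifying that both composites are chain homotopic to the identity on the invariant subcomplexes, which you assert rather than check; and for the independence of $\rmHb^\ast(\Omega)$ from the fundamental domains, note that changing $X_G$ changes both the cocycle and the coefficient module $\rmL^\infty(X_G)$, so the ``cohomologous cocycles give homotopic pullbacks'' argument must be run compatibly with the identification $\rmL^\infty(X_G)\cong\rmL^\infty(G\bs\Omega)$ --- you gesture at this correctly but it is where the bookkeeping actually lives.
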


\begin{proof}
	Apart from the fact that the isomorphism is isometric, this is exactly
	Proposition~4.6 in~\cite{monod+shalom} (with $S=\Omega$ and $E=\bbR$). The proof therein
	relies on~\cite{monod-book}*{Theorem~7.5.3 in~\S 7}, which also yields the
	isometry statement.
\end{proof}

To formulate the next theorem, consider the
measurable and countable-to-one map
\begin{gather*}
	\phi^\alpha_k\colon
	{H}^{k+1}\times X_{G}\to
	{G}^{k+1}\times X_{H}\\
	({h}_0,\dots,{h}_k,x)\to
	\bigl(\alpha({h}_0^{-1},x)^{-1},\dots,\alpha({h}_k^{-1},x)^{-1},
	{H} x\cap X_{H}\bigr).
\end{gather*}

\begin{theorem}\label{thm:induction in homology}
	Retain the notation of Theorem~\ref{thm:induction from Monod-Shalom}.
	Let $c_\Omega=\mu(X_{H})/\mu(X_{G})$ be the coupling index.
	We equip $X_{G}$ and $X_{H}$ with the
	normalized measures $\mu(X_{G})^{-1}\mu\vert_{X_{G}}$ and
	$\mu(X_{H})^{-1}\mu\vert_{X_{H}}$. Then
	\begin{gather}\label{eq:induction map on coinvariants}
		\alpha_k\colon C^{(1)}_k(H)\widehat\otimes_{\bbZ}\rmL^1(X_{G},\bbR)\to
		C^{(1)}_k(G)\widehat\otimes_\bbZ\rmL^1(X_{H},\bbR)\\
		\alpha_kf(\bar{g},x)=
		c_\Omega\cdot\sum_{(\bar{h},y)
		\in(\phi_k^\alpha)^{-1}(\bar{g},x)}
		f(\bar{h},y)\notag
		\end{gather}
	defines, using the identification in Example~\ref{ex:L1-C},
	a chain map
	that descends to the coinvariants
	\begin{equation*}
		C^{(1)}_\ast(H)\widehat\otimes_{\bbZ H}\rmL^1(X_{G},\bbR)\to
		C^{(1)}_\ast(G)\widehat\otimes_{\bbZ G}\rmL^1(X_{H},\bbR)
	\end{equation*}
	and induces an isometric isomorphism
	\[
		\rmHl_\ast(\alpha)\colon\rmHl_\ast\bigl({H}, \rmL^1(X_{G},\bbR)\bigr)\to
		\rmHl_\ast\bigl({G},\rmL^1(X_{H},\bbR)\bigr).
	\]
	Furthermore, the dual of the map~\eqref{eq:induction map on coinvariants} is
	the map~\eqref{eq:induction map on invariants}.
\end{theorem}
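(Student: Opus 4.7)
The plan is to realize $\alpha_k$ as the Banach predual of the Monod--Shalom cochain map $\alpha^k$ from Theorem~\ref{thm:induction from Monod-Shalom}, and then to upgrade it to an isometric isomorphism on homology either by duality or by constructing an explicit inverse. Via Example~\ref{ex:L1-C} I identify the source and target of $\alpha_k$ with $L^1(H^{k+1}\times X_{G},\bbR)$ and $L^1(G^{k+1}\times X_{H},\bbR)$, where $X_G, X_H$ carry their normalized probability measures. The central measure-theoretic input is the change-of-variables identity
\[
  \int_{G^{k+1}\times X_H}\sum_{(\bar h,y)\in(\phi^\alpha_k)^{-1}(\bar g,x)} F(\bar h,y)\,d(\bar g,x)
   = \int_{H^{k+1}\times X_G} F(\bar h,y)\,d(\bar h,y)
\]
for all integrable $F\ge 0$, where the measures are the \emph{unnormalized} products counting$\otimes\mu|_{X_H}$ and counting$\otimes\mu|_{X_G}$. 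This follows by Fubini together with two observations: for fixed $y\in X_G$ the componentwise map $h\mapsto \alpha(h^{-1},y)^{-1}$ is a bijection $H\to G$ coming from the $G$-partition $\Omega=\bigsqcup_g g^{-1}X_G$; and the Radon--Nikodym calculation $(y\mapsto Hy\cap X_H)_\ast(\mu|_{X_G}) = |Hx\cap X_G|\cdot \mu|_{X_H}$ combines with the sum $\sum_{y\in Hx\cap X_G}$ on the right to produce exactly the Fubini on the left. After passing to the normalized probability measures on $X_G, X_H$, this identity contributes the factor $\mu(X_G)/\mu(X_H)=c_\Omega^{-1}$, which the prefactor $c_\Omega$ in the definition of $\alpha_k$ is designed to cancel; hence $\alpha_k$ is well-defined, bounded, and in fact norm-preserving.

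Next I would verify the chain-map property and descent to coinvariants. Both differentials act only on the discrete ``group'' coordinates, and $\phi^\alpha_k$ acts componentwise there via $h_i\mapsto\alpha(h_i^{-1},y)^{-1}$; so deletion of the $i$-th coordinate commutes with $\phi^\alpha_k$, which yields $d\alpha_k=\alpha_{k-1}d$. Descent to coinvariants requires $\alpha_k$ to be $\bbZ H$-$\bbZ G$-balanced with respect to the diagonal actions, and this reduces to the cocycle identity $\alpha(h_1h_2,y)=\alpha(h_1,h_2\cdot y)\,\alpha(h_2,y)$ combined with the definition of the dot-action of $H$ on $X_G$. The duality statement $(\alpha_k)^\ast = \alpha^k$ is then an immediate consequence of the change-of-variables identity: for $\psi\in L^\infty(G^{k+1}\times X_H,\bbR)$ and $f\in L^1(H^{k+1}\times X_G,\bbR)$ one computes $\langle\psi,\alpha_k f\rangle = \int (\psi\circ\phi^\alpha_k)\cdot f$, and $\psi\circ\phi^\alpha_k$ is literally the formula defining $\alpha^k\psi$.

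Finally, to upgrade boundedness to an isometric isomorphism on homology I would construct $\alpha_k$'s inverse directly. The reverse cocycle $\beta\colon G\times X_H\to H$ associated to the fundamental domain $X_H$ produces, by the symmetric construction (with scalar $c_\Omega^{-1}$), a chain map $\beta_k$ going the other way. Unwinding definitions, $\phi^\beta_k$ and $\phi^\alpha_k$ are mutually inverse modulo null sets (because $\alpha$ and $\beta$ are mutually inverse cocycles relative to their respective fundamental domains), so $\alpha_k\circ\beta_k$ and $\beta_k\circ\alpha_k$ reduce to the identity on $L^1$. This gives that $\alpha_k$ is an isometric chain isomorphism of the coinvariant complexes, whence the induced map on homology is an isometric isomorphism; and the dual statement simultaneously recovers the Monod--Shalom isometric isomorphism on bounded cohomology. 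The main obstacle is bookkeeping: the $c_\Omega$ factor must be tracked consistently through the passage from $\mu|_{X_G},\mu|_{X_H}$ to their normalizations, and the cocycle identities must be applied in the precise order demanded by equivariance---this is delicate because four measure structures (counting on $G^{k+1}$ and $H^{k+1}$, and the probability measures on $X_G$ and $X_H$) must be interleaved, but no single step is deep.
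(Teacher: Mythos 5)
Your overall architecture --- identify both sides with $L^1$ spaces via Example~\ref{ex:L1-C}, prove a change-of-variables identity for $\phi_k^\alpha$, deduce the duality $(\alpha_k)^\ast=\alpha^k$, and get the chain-map and coinvariance properties from there --- matches the first half of the paper's proof, which isolates exactly your identity as Lemma~\ref{lem:coarea formula} (for countable-to-one, locally measure-preserving maps) and then uses Hahn--Banach to transfer the chain-map property from $\alpha^\ast$ to $\alpha_\ast$. But two of your supporting claims are false, and one of them sinks your final step. First, for fixed $y\in X_G$ the map $h\mapsto\alpha(h^{-1},y)^{-1}$ is \emph{not} a bijection $H\to G$: its fiber over $g$ is in bijection with $Hy\cap gX_G$, i.e.\ with the $H$-orbit of the image of $y$ in $G\backslash\Omega$, which in the ergodic non-atomic case is infinite. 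The change-of-variables identity is nevertheless true, but because $\phi_k^\alpha$ is countable-to-one and locally measure-preserving, not because it is assembled from bijections; relatedly, $\alpha_k$ is only norm-non-increasing (cancellation over the infinite fibers is possible), not norm-preserving.

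Second, and fatally for your endgame: since $\phi_k^\alpha$ has infinite fibers it cannot be essentially bijective, so $\phi_k^\beta$ and $\phi_k^\alpha$ are not mutually inverse modulo null sets, and $\beta_k\circ\alpha_k$ is a genuine double sum over fibers that does not collapse to the identity on $L^1$. There is no chain-level inverse of this simple form; at best $\beta_\ast\alpha_\ast$ is chain homotopic to the identity on the coinvariants, and producing such a homotopy with norm control is essentially the content of the Monod--Shalom induction machinery, not a routine unwinding. The paper avoids this entirely: it quotes Theorem~\ref{thm:induction from Monod-Shalom} for the fact that $\rmHb^\ast(\alpha)$ is an isometric isomorphism and then invokes the translation principle of L\"oh \cite{loeh-iso}*{Theorem~1.1}, which converts an isometric isomorphism on bounded cohomology into one on $\ell^1$-homology via precisely the duality $(\alpha_k)^\ast=\alpha^k$ you already established. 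You need that (or some substitute for it); your explicit inverse is not one.
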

	
	We need the following general (and easy) lemma
	\begin{lemma}\label{lem:coarea formula}
	Let $(X,\nu_X)$ and $(Y,\nu_Y)$ be standard
	measure spaces and $p\colon X\to Y$ a measurable map such that
	\begin{enumerate}
	\item the fiber
	$p^{-1}(y)$ is countable for $\nu_Y$-a.e.~$y\in Y$ and
	\item $p$ is locally measure-preserving,
	that is, if $p\vert_A$ is injective for a measurable $A\subset X$,
	then $\nu_X(A)=\nu_Y(p(A))$.
	\end{enumerate}
	Then for any $f\in\rmL^1(X,\nu_X)$
	the function $y\mapsto \sum_{x\in p^{-1}(y)}f(x)$ is $\nu_Y$-integrable and
	\begin{equation*}
	\int_X fd\nu_X=\int_Y\sum_{x\in p^{-1}(y)}f(x)d\nu_Y(y).
	\end{equation*}
	\end{lemma}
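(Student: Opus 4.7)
\emph{Proof plan for Lemma~\ref{lem:coarea formula}.} The plan is to reduce the assertion to the case where $p$ is injective by partitioning $X$ into countably many measurable pieces on which the restriction of $p$ is one-to-one, then assemble the identity from the hypothesis by Tonelli's theorem. By splitting $f$ into positive and negative parts and treating each separately, I may assume throughout that $f\ge 0$; once the identity is proved in this case, integrability of $y\mapsto\sum_{x\in p^{-1}(y)}f(x)$ and the equality for general $f\in\rmL^1(X,\nu_X)$ follow.

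The first step is to produce a measurable partition $X=\bigsqcup_{n\in\bbN}X_n$ such that $p\vert_{X_n}$ is injective for each $n$. Since $(X,\nu_X)$ is a standard measure space and the fibers of $p$ are almost-everywhere countable, this is a standard consequence of descriptive set theory (Lusin--Novikov uniformization): after discarding a null set I can equip $X$ with a Borel linear order, let $X_1$ consist of the fiberwise-minimal points, $X_2$ of the fiberwise-minimal points of $X\setminus X_1$, and so on. Each $X_n$ is measurable and, by construction, $p\vert_{X_n}$ is injective onto the measurable set $Y_n\defq p(X_n)\subset Y$.

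The second step is to transfer the hypothesis to integrals. For each $n$, hypothesis~(2) applied to any measurable $A\subset X_n$ gives $\nu_X(A)=\nu_Y(p(A))$, so $(p\vert_{X_n})_\ast(\nu_X\vert_{X_n})=\nu_Y\vert_{Y_n}$. By the standard approximation of non-negative measurable functions by simple functions and monotone convergence,
\[
\int_{X_n}f\,d\nu_X=\int_{Y_n}f\circ (p\vert_{X_n})^{-1}\,d\nu_Y
\]
for every non-negative measurable $f$. Summing over $n$ and applying Tonelli's theorem (to interchange the sum over $n$ with the integral over $Y$) yields
\[
\int_X f\,d\nu_X=\sum_{n\in\bbN}\int_{Y_n}f\circ(p\vert_{X_n})^{-1}\,d\nu_Y=\int_Y\Bigl(\sum_{n:\,y\in Y_n}f\bigl((p\vert_{X_n})^{-1}(y)\bigr)\Bigr)d\nu_Y(y).
\]
Because each $x\in p^{-1}(y)$ lies in exactly one $X_n$, the inner sum equals $\sum_{x\in p^{-1}(y)}f(x)$, giving the desired identity.

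The only genuine obstacle is the measurable selection in the first step; everything else is bookkeeping with Tonelli. For this I rely on the standard measure space assumption, which puts us in the Borel setting where Lusin--Novikov applies. If one wishes to avoid this tool, one can instead observe that the diagonal $\Delta\subset X\times_Y X$ is measurable, use a countable generating family of measurable sets separating points of $X$ to iteratively split $X$ into injective pieces, and discard null sets as needed; either route produces the required partition.
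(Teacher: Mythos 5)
Your argument is correct and follows essentially the same route as the paper: the paper's proof also reduces to characteristic functions of sets on which $p$ is injective and invokes the selection (Lusin--Novikov) theorem to handle general $f\in\rmL^1(X,\nu_X)$. You simply make the countable partition $X=\bigsqcup_n X_n$ and the Tonelli bookkeeping explicit where the paper compresses this into ``approximated by linear combinations of such characteristic functions.''
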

	\begin{proof}
	The assertion is obvious for $f=\chi_A$ being the characteristic function of a measurable
	subset $A\subset X$ for which $p\vert_A$ is injective. By the selection theorem
	every $f\in\rmL^1(X,\nu_X)$ can be approximated by linear combinations of such
	characteristic functions which proves the lemma.
	\end{proof}
\begin{proof}[Proof of Theorem~\ref{thm:induction in homology}]
    We verify that the dual of map~\eqref{eq:induction map on coinvariants} is
	the map~\eqref{eq:induction map on invariants}, that is,
	for $f\in C^{(1)}_k(H)\widehat\otimes_{\bbZ H}\rmL^1(X_{G},\bbR)$ and
	$g\in C_b^k(G,L^\infty(X_H,\bbR))$,
	\begin{equation}\label{eq: adjoint relation}
		\langle \alpha_kf, g\rangle =\langle f, \alpha^k g\rangle
	\end{equation}
	holds true. Since $\phi_k^\alpha$ is countable-to-one and locally measure-preserving,
	\eqref{eq: adjoint relation} is implied by the previous lemma as follows:
   	\begin{align*}
		\langle \alpha_kf, g\rangle &=
		\frac{\mu(X_{H})}{\mu(X_{G})}\sum_{\bar{{g}}\in{G}^{k+1}}\int_{X_{H}}
			\sum_{\substack{(\bar{{h}},y)\in{H}^{k+1}\times X_{G}\\\phi_k^\alpha(\bar{{h}},y)=(\bar{{g}},x)}}f(\bar{{h}},y)g(\bar{g},x)\mu(X_{H})^{-1}d\mu(x)\\
		&=\mu(X_{G})^{-1}\sum_{\bar{{g}}\in{G}^{k+1}}\int_{X_{H}}
					\sum_{\substack{(\bar{{h}},y)\in{H}^{k+1}\times X_{G}\\\phi_k^\alpha(\bar{{h}},y)=(\bar{{g}},x)}}f(\bar{{h}},y)g\circ\phi_k^\alpha(\bar{h},y)d\mu(x)\\
		&=\mu(X_{G})^{-1}\cdot\sum_{\bar{{h}}\in{H}^{k+1}}\int_{X_{G}}
					f(\bar{{h}},x)g\circ\phi_k^\alpha(\bar{{h}},x)d\mu(x)\\
		&=\langle f, \alpha^k g\rangle
\end{align*}

Since $\alpha^\ast$ is a chain map, we know that the dual of
$\alpha_kd-d\alpha_{k+1}$ vanishes. The Hahn-Banach theorem implies that
$\alpha_kd-d\alpha_{k+1}=0$, so $\alpha_\ast$ is also a chain map. Similary
one concludes that $\alpha_\ast$ descends to the coinvariants from the fact that
$\alpha^\ast$ restricts to the invariants.

Since $\rmHb^k(\alpha)$ is an isometric isomorphism, also
$\rmHl_k(\alpha)$ is an isometric isomorphism by~\cite{loeh-iso}*{Theorem~1.1}.
\end{proof}

\begin{remark} With more effort one can also show that $\rmHl_\ast(\alpha)$ does
	not depend on the choice of the fundamental domains, thus could be rightfully
	denoted by $\rmHl_\ast(\Omega)$ similar to the cohomological case. Since
	we do not need this, we refrain from proving this.
\end{remark}

\subsection{Invariance of the simplicial volume}\label{sub:invariance simplicial volume}

\begin{lemma}\label{lem:factorization over sobolev}
  Let $(\Omega,\mu)$ be an ME-coupling of two finitely generated groups
  ${G}$ and ${H}$.
  Let $X_{G}$ and $X_{H}$ be fundamental domains
  of the ${G}$- and ${H}$-action, respectively.
  Let $c_\Omega=\mu(X_H)/\mu(X_G)$ be the coupling index.
  Assume that $X_{G}$ is
  integrable, and let $\alpha\colon{H}\times X_{G}\to {G}$ be the
  associated integrable cocycle.
  Then the image of the composition
\begin{equation}\label{eq:cocycle map}
	H_\ast\bigl({H}, \bbZ\bigr)\to
	H_\ast^{(1)}\bigl({H}, \rmL^1(X_{G},\bbR)\bigr)\xrightarrow{H_\ast^{(1)}(\alpha)}
	H_\ast^{(1)}\bigl({G}, \rmL^1(X_{H},\bbR)\bigr),
\end{equation}
  is contained in
   \begin{equation}\label{eq:inclusion map}
		c_\Omega\cdot \im\Bigl(H_\ast^{(1,1)}\bigl({G},\rmL^1(X_{H},\bbZ)\bigr)\to
		H_\ast^{(1)}\bigl({G},\rmL^1(X_{H},\bbR)\bigr)\Bigr),
   \end{equation}
	where all maps except $H_\ast^{(1)}(\alpha)$ are the composition of
	the corresponding comparision and
	  coefficient change maps.
\end{lemma}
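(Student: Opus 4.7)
The plan is to lift the composition \eqref{eq:cocycle map} to the chain level, push an integer cycle representing $[\sigma] \in H_k(H,\bbZ)$ through the explicit induction map $\alpha_k$ of Theorem~\ref{thm:induction in homology}, and then exploit both the integrality of $\alpha_k/c_\Omega$ and the integrability of the cocycle to land in the Sobolev integer chain complex over $G$.

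Concretely, I would first represent a class in $H_k(H,\bbZ)$ by a finitely supported integer cycle $\sigma \in (C_k(H))_H$. The coefficient change $\bbZ \hookrightarrow \rmL^1(X_G,\bbZ) \subset \rmL^1(X_G,\bbR)$ is induced by the $H$-invariant element $\chi_{X_G}$, the characteristic function of $X_G$ viewed through the identification $X_G \cong G\bs\Omega$. Hence the image of $[\sigma]$ under the first map in~\eqref{eq:cocycle map} is represented at the chain level by $\sigma \otimes \chi_{X_G}$. Because $\sigma$ has finite support and $\chi_{X_G}$ is integer-valued on a set of finite measure, this chain has finite Sobolev norm.

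Next I would apply $\alpha_k$ from Theorem~\ref{thm:induction in homology} and isolate the factor $c_\Omega$. The explicit formula yields $\alpha_k(\sigma \otimes \chi_{X_G}) = c_\Omega \cdot \tau$, where
\begin{equation*}
\tau(\bar g, x) = \sum_{(\bar h, y) \in (\phi_k^\alpha)^{-1}(\bar g, x)} \sigma(\bar h)\,\chi_{X_G}(y).
\end{equation*}
Since the summands are integers, $\tau$ takes integer values, so $\tau$ corresponds to an element of $C_k^{(1)}(G) \widehat\otimes_{\bbZ} \rmL^1(X_H,\bbZ)$. Because $\alpha_k$ is a chain map and $\sigma \otimes \chi_{X_G}$ is a cycle, $\tau$ is a cycle in the coinvariant complex $C_k(G) \widehat\otimes_{\bbZ G} \rmL^1(X_H,\bbZ)$. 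Once I verify that $\|\tau\|_{1,1}$ is finite, $\tau$ represents a class in $H_k^{(1,1)}(G, \rmL^1(X_H,\bbZ))$ whose image in $H_k^{(1)}(G, \rmL^1(X_H,\bbR))$ equals $1/c_\Omega$ times the image of $[\sigma]$ under~\eqref{eq:cocycle map}, proving the lemma.

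The main obstacle is the bound on $\|\tau\|_{1,1}$. Applying Lemma~\ref{lem:coarea formula} to the countable-to-one, locally measure-preserving map $\phi_k^\alpha$, I obtain
\begin{equation*}
\|\tau\|_{1,1} \le \sum_{\bar h \in H^{k+1}} |\sigma(\bar h)| \int_{X_G} \bigl(1 + \diam(\alpha(h_0^{-1},y)^{-1}, \ldots, \alpha(h_k^{-1}, y)^{-1})\bigr)\,d\mu(y).
\end{equation*}
The diameter is dominated by $2 \sum_{i=0}^k l(\alpha(h_i^{-1},y))$ via the triangle inequality in $G$. Each function $y \mapsto l(\alpha(h_i^{-1}, y))$ lies in $\rmL^1(X_G)$ by integrability of $X_G$, and $\sigma$ has finite support, so $\|\tau\|_{1,1} < \infty$. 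This step, converting integrability of the cocycle into a Sobolev estimate after induction, is precisely where the integrable (rather than merely measurable) ME-coupling assumption is used in an essential way.
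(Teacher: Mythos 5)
Your proposal is correct and follows essentially the same route as the paper: represent the class by a finitely supported integer chain, observe that $c_\Omega^{-1}\alpha_k$ of it is $\bbZ$-valued, and then bound its Sobolev norm by pulling the integral back along the countable-to-one map $\phi_k^\alpha$ via Lemma~\ref{lem:coarea formula} and invoking the integrability of the cocycle. The only cosmetic difference is that you work with the specific chain $\sigma\otimes\chi_{X_G}$ and make the triangle-inequality bound $\diam\le 2\sum_i l(\alpha(h_i^{-1},y))$ explicit, whereas the paper argues for an arbitrary bounded, finitely supported integer-valued function and leaves that last estimate implicit.
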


\begin{proof}
  For a $(k+1)$-tuple
  $\bar{{h}}=({h}_0,\dots,{h}_k)\in{H}^{k+1}$ we abbreviate:
  \begin{gather*}
    \alpha(\bar{{h}},x)=\bigl(\alpha({h}_0,x),\dots,\alpha({h}_k,x)\bigr),\\
    \bar{{h}}^{-1}=\bigl({h}_0^{-1},\dots,{h}_k^{-1}\bigr).
  \end{gather*}
We use the identifications in~Example~\ref{ex:L1-C}.
The image of
$\rmC_k({H})=\rmC_k(H)\otimes_\bbZ\bbZ$ in
\[C_k^{(1)}(H)\widehat\otimes_{\bbZ} L^1(X_G,\bbZ)\cong
\rmL^1({H}^{k+1}\times X_{G},\bbZ)\]
is certainly contained in the set of \emph{bounded}
measurable functions $f:{H}^{k+1}\times X_{G}\to\bbZ$ for which
there is a finite subset $F\subset{H}^{k+1}$ such that $f$ is
supported on $F\times X_{G}$.
Let $f:{H}^{k+1}\times X_{G}\to\bbZ$ be such.
It is immediate from~\eqref{eq:induction map on coinvariants} that
$c_\Omega^{-1}\cdot \alpha_k f$ is $\bbZ$-valued. So it remains to show that
 \[
 	\int_{{G}^{k+1}\times X_{H}}\abs{\alpha_kf(\bar{g},y)}
\diam(\bar{g})<\infty.
 \]

 Using Lemma~\ref{lem:coarea formula} this is implied by
 \begin{align*}
	\int_{{G}^{k+1}\times X_{H}}\abs{\alpha_kf(\bar{g},y)}\diam(\bar{g})
	&\le \int_{{G}^{k+1}\times X_{H}}\int_{(\phi_k^\alpha)^{-1}(\bar{g},y)}\abs{f(\bar{h},x)}\diam(\bar{g})\\
	&= \int_{{G}^{k+1}\times X_{H}}\int_{(\phi_k^\alpha)^{-1}(\bar{g},y)}\abs{f(\bar{h},x)}\diam\bigl(\bar\alpha(\bar{h}^{-1},x)^{-1}\bigr)\\
	&\overset{\ref{lem:coarea formula}}{=} \int_{{H}^{k+1}\times X_{G}}\abs{f(\bar{h},x)}\diam\bigl(\bar\alpha(\bar{h}^{-1},x)^{-1}\bigr)\\
	&\le \operatorname{ess-sup}(f)\cdot\int_{{H}^{k+1}\times X_{G}}\diam\bigl(\bar\alpha(\bar{h}^{-1},x)^{-1}\bigr)<\infty.
 \end{align*}
 The last step follows from the integrability.
\end{proof}

Let $N$ be an aspherical topological space, and let $H=\pi_1(N)$. By asphericity and
the fundamental lemma of homological algebra there is up to
equivariant chain homotopy a unique $H$-equivariant chain homomorphism
\[
	c_H\colon C_\ast(\widetilde N)\to C_\ast(H)
\]
from the singular chain complex of the universal cover $\widetilde N$
to the standard resolution of $H$. By a theorem
of Gromov~\cite{ivanov}*{Theorem~4.1} the map $c_H$ induces an isometric isomorphism
in bounded cohomology with $\bbR$-coefficients. By the translation principle in~\cite{loeh-iso}*{Theorem~1.1}
$c_H$ induces an isometric isomorphism in $\ell^1$-homology, and thus
(compare~\cite{loeh-iso}*{Proposition~2.4})
the induced
map in homology is an isometric isomorphism:

\begin{lemma}\label{lem:fundamental class in group homology}
Let $N$ be aspherical and $H=\pi_1(N)$. The canonical map
	\[
		H_\ast(c_H)\colon H_\ast(N,\bbR)\to H_\ast(H,\bbR)
	\]
	is an isometric isomorphism with respect to the semi-norms induced by
	the \mbox{$\ell^1$-norms}.
\end{lemma}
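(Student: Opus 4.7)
The plan is essentially to chain together the three results mentioned just before the lemma statement. The map $c_H$ is obtained by applying the fundamental lemma of homological algebra: asphericity of $N$ means $\widetilde N$ is contractible, so $C_\ast(\widetilde N)$ is an acyclic chain complex of free $\bbZ H$-modules (the singular simplices of $\widetilde N$ carry a free $H$-action), hence a free resolution of $\bbZ$. Therefore any two $H$-equivariant chain maps to $C_\ast(H)$ lifting $\id_\bbZ$ are equivariantly chain homotopic, so $c_H$ is canonically defined up to equivariant homotopy. Passing to coinvariants, one recovers the standard identification $H_\ast(N,\bbR)\cong H_\ast(H,\bbR)$; the content of the lemma is the isometry statement for the $\ell^1$-seminorms.

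First, I would invoke Gromov's theorem in the form stated in Ivanov (Theorem~4.1): the induced map
\[
	H_b^\ast(c_H^\vee)\colon H_b^\ast(H,\bbR)\xrightarrow{\;\cong\;} H_b^\ast(N,\bbR)
\]
is an isometric isomorphism of semi-normed vector spaces. Here $c_H^\vee$ denotes the dual cochain map between the bounded cochain complexes, and the equivariance of $c_H$ ensures it descends to the invariants, yielding the comparison on the level of bounded cohomology of $H$ and singular bounded cohomology of $N$.

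Second, I would dualize back to $\ell^1$-homology using the translation principle from Löh (loeh-iso, Theorem~1.1): an equivariant chain map inducing an isometric isomorphism in bounded cohomology with $\bbR$-coefficients also induces an isometric isomorphism in $\ell^1$-homology with $\bbR$-coefficients. Applied to $c_H$, this gives
\[
	H^{(1)}_\ast(c_H)\colon H_\ast^{(1)}(N,\bbR)\xrightarrow{\;\cong\;} H_\ast^{(1)}(H,\bbR).
\]

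Third, I would pass from $\ell^1$-homology back to ordinary real homology via Löh (loeh-iso, Proposition~2.4), which asserts that the comparison map from ordinary real homology to $\ell^1$-homology is semi-norm preserving on classes; in particular the $\ell^1$-seminorm on $H_\ast(-,\bbR)$ agrees with the one pulled back from $H_\ast^{(1)}(-,\bbR)$. Combining this with the naturality square
\[
	\xymatrix{
	H_\ast(N,\bbR)\ar[r]^{H_\ast(c_H)}\ar[d] & H_\ast(H,\bbR)\ar[d]\\
	H_\ast^{(1)}(N,\bbR)\ar[r]^{H_\ast^{(1)}(c_H)}_{\cong} & H_\ast^{(1)}(H,\bbR)}
\]
in which the vertical maps preserve the semi-norms and the bottom map is an isometric isomorphism, yields that $H_\ast(c_H)$ is an isometry. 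Since we already know it is an isomorphism (by the standard identification of singular homology of an aspherical space with the homology of its fundamental group), this completes the proof. The only non-routine step is the appeal to Ivanov/Gromov's theorem on bounded cohomology, but we use it as a black box.
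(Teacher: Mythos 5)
Your proposal is correct and follows essentially the same route as the paper: the paper's own justification (given in the paragraph preceding the lemma) is exactly the chain Gromov/Ivanov isometric isomorphism in bounded cohomology $\Rightarrow$ L\"oh's translation principle (Theorem~1.1) for $\ell^1$-homology $\Rightarrow$ L\"oh's Proposition~2.4 identifying the seminorm on ordinary real homology with the one induced from $\ell^1$-homology. Your added naturality square just makes explicit the last step the paper leaves implicit.
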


\begin{theorem}\label{thm:main result about induction in cohomology}
	Let $M$ and $N$ be closed, oriented, negatively curved manifolds of
	dimension $n$.
	Let $(\Omega,\mu)$ be
	an ergodic, integrable ME-coupling $(\Omega,\mu)$ of the fundamental groups
	${G}=\pi_1(M)$ and ${H}=\pi_1(N)$ with coupling
	index $c=\frac{\mu({H}\bs\Omega)}{\mu({G}\bs\Omega)}$.
	
	Let $x_G\in H^n(G,\bbR)$ be the element that maps to the
	cohomological fundamental class
	of $M$ under the isomorphism $H^n(c_G): H^n(G)\to H^n(M)$. Define
	$x_H\in H^n(H,\bbR)$ analogously.
	
	Suppose that $x_G^b\in H_b^n({G},\bbR)$ is an element that maps
    to $x_G$ under the comparision (forgetful) map $H_b^n({G},\bbR)\to H_n({G},\bbR)$.
	Consider the
	composition
	\begin{multline}\label{eq:composition induction cohomological}
		H_b^n({G}, \bbR)\to H_b^n(G,\rmL^\infty({G}\bs\Omega,\bbR))
	    \xrightarrow{H_b^n(\Omega)}H_b^n({H},\rmL^\infty({G}\bs\Omega,\bbR))\\
     	\xrightarrow{I^n} H_b^n({H},\bbR)\to H^n({H},\bbR)
	\end{multline}
	where the first map is induced by the inclusion of constant functions,
	$I^n$ is the map induced by integration in the
	coefficients and the last map is the comparision map. Then $x_G^b$ is
	mapped to $\pm c\cdot x_H$
	under~\eqref{eq:composition induction cohomological}.
\end{theorem}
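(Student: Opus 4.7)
The plan is to reduce the cohomological claim to a pairing identity with the fundamental class of $N$, transport this pairing via the adjunctions between bounded cohomology and $\ell^1$-homology to the $G$-side, and then invoke Lemma~\ref{lem:factorization over sobolev} together with Theorem~\ref{hyp-sob} to extract the factor $c$. Since $N$ is an aspherical closed oriented $n$-manifold, Lemma~\ref{lem:fundamental class in group homology} gives $H^n(H,\bbR)\cong\bbR\cdot x_H$ and $H_n(H,\bbR)\cong\bbR\cdot[N]$ with $\langle x_H,[N]\rangle=\pm 1$. Denoting the composition~\eqref{eq:composition induction cohomological} by $\Phi$, the claim is equivalent to $\langle\Phi(x_G^b),[N]\rangle=\pm c$. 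Because $\Phi(x_G^b)$ arises from a bounded class, this pairing factors through the canonical pairing $H_b^n(H,\bbR)\otimes H^{(1)}_n(H,\bbR)\to\bbR$ evaluated on $\Phi^b(x_G^b):=I^n\circ H_b^n(\Omega)\circ c_G^\ast(x_G^b)$ and on the comparison image $\iota[N]$.

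Each of the three maps composing $\Phi^b$ has an identifiable homological adjoint. The inclusion of constants $c_G^\ast$ is adjoint to the map $\pi_H\colon H^{(1)}_n(G,L^1(X_H,\bbR))\to H^{(1)}_n(G,\bbR)$ induced by integration against the normalized measure on $X_H$. By Theorem~\ref{thm:induction in homology}, $H_b^n(\Omega)$ is adjoint to the isometric isomorphism $H^{(1)}_n(\alpha)$. Finally, $I^n$ is adjoint to the coefficient change $j_G\colon H^{(1)}_n(H,\bbR)\to H^{(1)}_n(H,L^1(X_G,\bbR))$ induced by the inclusion $1\in\bbR\mapsto 1_{X_G}\in L^1(X_G,\bbR)$. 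Chaining these adjunctions yields
\begin{equation*}
\langle\Phi(x_G^b),[N]\rangle = \bigl\langle x_G^b,\,\pi_H\circ H^{(1)}_n(\alpha)\circ j_G\circ\iota\,([N])\bigr\rangle.
\end{equation*}

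Since $[N]$ lifts from an integer class in $H_n(H,\bbZ)$, the composition $H^{(1)}_n(\alpha)\circ j_G\circ\iota$ applied to $[N]$ is precisely the map considered in Lemma~\ref{lem:factorization over sobolev}, which places the output in $c\cdot\im\bigl(H^{(1,1)}_n(G,L^1(X_H,\bbZ))\to H^{(1)}_n(G,L^1(X_H,\bbR))\bigr)$. Because $G=\pi_1(M)$ is word-hyperbolic, Theorem~\ref{hyp-sob} applied to the complete normed $\bbZ G$-module $E=L^1(X_H,\bbZ)$ identifies the Sobolev image with the image from ordinary integer homology. Passing to real coefficients and applying $\pi_H$, we see that this class equals $c\cdot\iota(y_0)$ for some $y_0\in H_n(G,\bbR)$. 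By the compatibility of the pairings with the comparison maps, $\langle x_G^b,\iota(y_0)\rangle=\langle x_G,y_0\rangle$, hence $\langle\Phi(x_G^b),[N]\rangle=c\cdot\langle x_G,y_0\rangle$.

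It remains to identify $y_0$. Lemma~\ref{lem:fundamental class in group homology} gives $H_n(G,\bbR)\cong\bbR\cdot[M]_G$ with $\langle x_G,[M]_G\rangle=\pm 1$, so $y_0=r\cdot[M]_G$ for some $r\in\bbR$ and the pairing evaluates to $\pm cr$. The main technical obstacle is to prove $r=\pm 1$. I would do this by tracing the explicit chain-level transfer formula~\eqref{eq:induction map on coinvariants}: a fundamental cycle of $N$, inflated by the constant function $1_{X_G}$, is mapped by $H^{(1)}_n(\alpha)$ to an $L^1(X_H,\bbZ)$-valued chain on $G$ whose $X_H$-fiberwise integration produces a real chain on $G$ representing $\pm[M]_G$. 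The normalization of fundamental-domain measures in Theorem~\ref{thm:induction in homology}, combined with the ergodicity and measure-preserving nature of the $G$- and $H$-actions on $\Omega$, forces the resulting integrated cycle to coincide with $\pm[M]_G$, giving $r=\pm 1$ and completing the argument.
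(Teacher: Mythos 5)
Your overall strategy --- reduce the claim to the pairing $\langle\Phi(x_G^b),[N]\rangle=\pm c$, transport it by adjunction to the $\ell^1$-homology side, and feed the transferred fundamental class of $N$ through Lemma~\ref{lem:factorization over sobolev} and Theorem~\ref{hyp-sob} --- is exactly the paper's route, and that part of your argument is sound. The gap is in the step you yourself flag as the main obstacle: showing $r=\pm 1$. Your proposed argument (tracing the chain-level transfer formula~\eqref{eq:induction map on coinvariants} and appealing to normalization, ergodicity and measure preservation of the actions) cannot work as stated: at the chain level the $X_H$-integrated transferred cycle is merely some $\ell^1$-cycle representing $r$ times the class of $[M]$, and nothing in the local structure of $\alpha_k$ prevents $r$ from being an arbitrary non-zero number --- a priori the coupling could ``wrap around'' several times, exactly as a degree-$d$ map would. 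No purely chain-level or measure-theoretic identity pins $r$ down.

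The paper determines $r$ by two genuinely different inputs. First, integrality: since $G$ is an $n$-dimensional orientable Poincar\'e duality group, one has functorial isomorphisms $H_n(G,L^1(X_H,\bbZ))\cong H^0(G,L^1(X_H,\bbZ))\cong L^1(X_H,\bbZ)^G\cong\bbZ$, the last by ergodicity; hence the class produced by Lemma~\ref{lem:factorization over sobolev} and Theorem~\ref{hyp-sob} is $e\cdot c_\Omega$ times the image of $[M]$ with $e$ a non-zero \emph{integer}. This is where ergodicity actually enters --- in identifying the coefficient invariants, not in any chain computation. Second, $\abs{e}=1$ is forced by a norm count: every map in the homological composition is isometric (Theorem~\ref{thm:induction in homology}, Lemma~\ref{lem:fundamental class in group homology}, and the isometry of the inclusion-of-constants and comparison maps), so $\norm{N}\ge \abs{e}\,c_\Omega\norm{M}$; running the entire argument with the roles of $G$ and $H$ interchanged gives $\norm{M}\ge c_\Omega^{-1}\norm{N}$, and since $\norm{M},\norm{N}>0$ by Thurston's theorem for closed negatively curved manifolds, these force $\abs{e}=1$. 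Both the integrality step and the symmetric double estimate (hence the positivity of both simplicial volumes) are indispensable and are missing from your proposal.
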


\begin{proof}[Proofs of Theorems~\ref{thm:invariance of simplicial volume} and~\ref{thm:main result about induction in cohomology}]
    Let
	$H_n(i_H)\colon H_n({H},\bbR)\to H_n^{(1)}({H}, \bbR)$ denote the comparision
	map; it is isometric
	with respect to the semi-norms induced by the $\ell^1$-norm on the chain complexes:
	This follows from the fact that
	$C_\ast({H})\otimes_{\bbZ{H}}\bbR\to C_\ast^{(1)}({H})\widehat\otimes_{\bbZ H}\bbR$ is
	isometric and has dense image (compare~\cite{loeh-iso}*{Proposition~2.4}).
    We denote -- by a slight abuse of notation -- the comparision (forgetful)
    map for the group $H$ in
    bounded cohomology by $H^n(i_H)\colon H^n_b(H,\bbR)\to H^n(H,\bbR)$.
	We define $H_n(i_G)$ and $H^n(i_G)$ for the group $G$ similarly.

	Let $X_H\subset\Omega$ and $X_G\subset\Omega$ be integrable fundamental domains of
	the $H$-action and $G$-action on $\Omega$, respectively. Let
	$\alpha\colon H\times X_G\to G$ be the cocycle associated to $X_G$. For the
	following we endow $X_H$ and $X_G$ with the normalized measures
	$\mu(X_H)^{-1}\mu\vert_{X_H}$ and $\mu(X_G)^{-1}\mu\vert_{X_G}$, respectively.

	With normalization, the chain map
	$j_H\colon C_\ast^{(1)}({H})\widehat\otimes_{\bbZ H}\bbR\to
	C_\ast^{(1)}({H})\widehat\otimes_{\bbZ H}\rmL^1(X_{G},\bbR)$
	given by the inclusion of constant functions is isometric.
	Integration in $L^1(X_G,\bbR)$ provides a norm-decreasing left inverse.
	Hence the induced map in $\ell^1$-homology
	\[H_n^{(1)}(j_{H})\colon H_n^{(1)}\bigl({H},\bbR\bigr)\to
	 H_n^{(1)}\bigl({H},\rmL^1(X_{G},\bbR)\bigr)\]
	is isometric. Again by a slight abuse of notation, we denote
	the map in bounded cohomology induced by inclusion of constants maps by
	\[
		H^n_b(j_H)\colon H^n_b\bigl(H,\bbR\bigr)\to
		H^n_b\bigl(H,\rmL^\infty(X_{G},\bbR)\bigr).
	\]
	We define the map $j_{G}$ for the group ${G}$ similarly.
	
	We start with the proof of Theorem~\ref{thm:invariance of simplicial volume}.
	Let $m=\dim(M)$ and $n=\dim(N)$. Assume that $\norm{N}>0$.
	Let $[N]\in H_n(N,\bbR)$ be the homological fundamental class of $N$.
	Since each map in the
	composition
	\begin{multline*}
		H_n({H},\bbR)\xrightarrow{H_n(i_H)}
		H_n^{(1)}({H},\bbR)\xrightarrow{H_n^{(1)}(j_{H})}
	    H_n^{(1)}({H},\rmL^1(X_{G},\bbR))\to\\
		\xrightarrow{H_n^{(1)}(\alpha)}
		H_n^{(1)}({G},\rmL^1(X_{H},\bbR))
	\end{multline*}
	is isometric with respect to the semi-norms induced by the respective
	$\ell^1$-norms (see Theorem~\ref{thm:induction in homology})
	and $H_n({H},\bbR)$ is
	generated by the element $H_n(c_{H})([N])$ with positive semi-norm
	(Lemma~\ref{lem:fundamental class in group homology}), we obtain
	that $H_n^{(1)}(\alpha)\circ H_n^{(1)}(j_{H})\circ  H_n(i_H)$ is injective.
	Lemma~\ref{lem:factorization over sobolev} and the fact
	that $[N]\in\im(H_n(N,\bbZ)\to H_n(N,\bbR))$ yield that
	\begin{multline*}
		0\ne H_n^{(1)}(\alpha)\circ H_n^{(1)}(j_{H})\circ  H_n(i_H)\circ  H_n(c_H)([N])
		\in\\ c_\Omega\cdot\im\Bigl(H_n^{(1,1)}\bigl({G},\rmL^1(X_{H},\bbZ)\bigr)\to
		H_n^{(1)}\bigl({G},\rmL^1(X_{H},\bbR)\bigr)\Bigr).
	\end{multline*}
	If ${G}$ is word-hyperbolic, then Theorem~\ref{hyp-sob} implies
	that
	\begin{multline}\label{eq:image essentially in ordinary homology}
		H_n^{(1)}(\alpha)\circ H_n^{(1)}(j_{H})\circ  H_n(i_H)\circ  H_n(c_H)([N])
		\in\\ c_\Omega\cdot\im\Bigl( H_n\bigl({G},\rmL^1(X_{H},\bbZ)\bigr)\to
			H_n^{(1)}\bigl({G},\rmL^1(X_{H},\bbR)\bigr)\Bigr).
	\end{multline}
    In particular, $H_n\bigl({G},\rmL^1(X_{H},\bbZ)\bigr)\ne 0$, which implies
	that $n\le m=\dim(M)$.
	
	Next assume that ${H}$ and ${G}$ are both word-hyperbolic and
	that $M$ and $N$ have positive simplicial volume. From the argument above and
	by symmetry we conclude that $m=n$.
	
	 The group ${G}$ is an orientable Poincare duality group; the
	  Poincare duality isomorphism is functorial with respect to
	  coefficient homomorphisms. Further, for any coefficient module $E$
	  there is a functorial isomorphism $\rmH^0({G},E)\cong
	  E^{G}$. Thus we obtain a commutative diagram:
	  \[
	 	\xymatrix{
			\rmH_n\bigl({G},\bbZ\bigr)\ar[r]\ar[d]^\cong&
			\rmH_n\bigl({G},\rmL^1(X_{H},\bbZ)\bigr)\ar[d]^\cong\\
	    	\rmH^0\bigl({G},\bbZ\bigr)\ar[r]\ar[d]^\cong&
			\rmH^0\bigl({G},\rmL^1(X_{H},\bbZ)\bigr)\ar[d]^\cong\\
	    	\bbZ\ar[r]^\cong &
			\rmL^1(X_{H},\bbZ)^{G}
		}
	  \]
	  The bottom map is an isomorphism because of ergodicity.
	  In combination with~\eqref{eq:image essentially in ordinary homology} this
	  implies that there is a non-zero integer $e\in\bbZ$ such that
	  \begin{multline}\label{eq:image of fundamental class}
	  			H_n^{(1)}(\alpha)\circ H_n^{(1)}(j_{H})\circ H_n(i_H)\circ H_n(c_H)([N])\\=e\cdot c_\Omega\cdot
				H_n^{(1)}(j_{G})\circ H_n(i_{G})\circ  H_n(c_{G})([M]).
	  \end{multline}
	  Since the maps involved here are isometric and $\abs{e}\ge 1$, this implies
	  that
	  \[
	  		\norm{N}\ge c_\Omega\cdot\norm{M}.
	  \]
	  By interchanging the roles of ${H}$ and ${G}$, we obtain
	  similarly $\norm{M}\ge c^{-1}\cdot\norm{N}$ and thus
  	\begin{equation}\label{eq:e is one}
		e=\pm 1~\text{ and }~\norm{M}=c_\Omega\cdot\norm{N},
	\end{equation}
	which concludes the proof
	of Theorem~\ref{thm:invariance of simplicial volume}.
	
	Next we prove Theorem~\ref{thm:main result about induction in cohomology}.
 	The assumptions imply that $M$ and $N$ have positive simplicial
	volume~\cite{gromov}*{0.3~Thurston's theorem}. Hence we know from the
	argument above that~\eqref{eq:image of fundamental class} holds with
	$e=\pm 1$. The assertion follows from the fact that $H^n(H,\bbR)\cong\bbR$
	and $H^n(i_H)\circ I^n\circ H_b^n(\Omega)\circ H_b^n(j_G)(x_G^b)$
	evaluated against the image $H_n([c_H])([N])$
	of the fundamental class of $[N]$ is $\pm c_\Omega$:
	\begin{align*}
		\bigl\langle H^n(i_H)\circ I^n\circ H_b^n(\Omega)&\circ H_b^n(j_G)(x_G^b), H_n(c_H)([N])\bigl\rangle\\ &=
			\bigl\langle H^n(i_H)\circ I^n\circ H_b^n(\alpha)\circ H_b^n(j_G)(x_G^b), H_n(c_H)([N])\bigl\rangle \\
		&= 	\bigl\langle I^n\circ H_b^n(\alpha)\circ H_b^n(j_G)(x_G^b), H_n(i_H)\circ H_n(c_H)([N])\bigl\rangle \\
		&=\bigl\langle H_b^n(\alpha)\circ H_b^n(j_G)(x_G^b), H_n^{(1)}(j_H)\circ H_n(i_H)\circ H_n(c_H)([N])\bigl\rangle \\
		&=  \bigl\langle  H_b^n(j_G)(x_G^b),  H_n^{(1)}(\alpha)\circ H_n^{(1)}(j_H)\circ H_n(i_H)\circ H_n(c_H)([N])\bigl\rangle\\
\text{use~\eqref{eq:image of fundamental class}}\qquad		&=\pm c_\Omega\cdot\bigl\langle  H_b^n(j_G)(x_G^b),  H_n^{(1)}(j_G)\circ H_n(i_G)\circ H_n(c_G)([M])\bigl\rangle \\
		&=\pm c_\Omega\cdot \bigl\langle x_G^b,  H_n(i_G)\circ H_n(c_G)([M])\bigl\rangle \\
		&=\pm c_\Omega\cdot \bigl\langle x_G, H_n(c_G)([M])\\
		&= \pm c_\Omega.\qedhere
	\end{align*}
\end{proof}

\begin{bibdiv}
\begin{biblist}

\bib{mostow+me}{article}{
	author={Bader, U.},
	author={Furman, A.},
	author={Sauer, R.},
	title={Integrable measure equivalence and rigidity of hyperbolic lattices},
	note={Preprint},
	date={2009},
}

\bib{bridson+haefliger}{book}{
	   author={Bridson, M. R.},
	   author={Haefliger, A.},
	   title={Metric spaces of non-positive curvature},
	   series={Grundlehren der Mathematischen Wissenschaften},
	   volume={319},
	   publisher={Springer-Verlag},
	   date={1999},
}

\bib{conj}{article}{
	author={Burger, M.},
	author={Iozzi, A.},
	author={Monod, N.},
	author={Wienhard, A.},
	title={Bounds for cohomology classes},
	book={
       title={Guido's book of conjectures},
	   series={Monographies de L'Enseignement Math\'ematique},
	   volume={40},
	   publisher={L'Enseignement Math\'ematique},
	   date={2008},
       },
}

\bib{connes}{article}{
   author={Connes, A.},
   author={Moscovici, H.},
   title={Cyclic cohomology, the Novikov conjecture and hyperbolic groups},
   journal={Topology},
   volume={29},
   date={1990},
   number={3},
   pages={345--388},
}
\bib{conway}{book}{
	   author={Conway, J. B.},
	   title={A course in functional analysis},
	   series={Graduate Texts in Mathematics},
	   volume={96},
	   edition={2},
	   publisher={Springer-Verlag},
	   date={1990},
}

\bib{delzant}{book}{
   author={Coornaert, M.},
   author={Delzant, T.},
   author={Papadopoulos, A.},
   title={G\'eom\'etrie et th\'eorie des groupes},
   series={Lecture Notes in Mathematics},
   volume={1441},
   note={Les groupes hyperboliques de Gromov},
   publisher={Springer-Verlag},
   date={1990},
}

\bib{diestel}{book}{
   author={Diestel, J.},
   author={Uhl, J. J.},
   title={Vector measures},
   note={Mathematical Surveys, No. 15},
   publisher={American Mathematical Society},
   date={1977},
}
\bib{furman}{article}{
   author={Furman, Alex},
   title={Gromov's measure equivalence and rigidity of higher rank lattices},
   journal={Ann. of Math. (2)},
   volume={150},
   date={1999},
   number={3},
   pages={1059--1081},
}
\bib{Furman:MGT}{article}{
   author={Furman, A.},
   title={A survey of Measured Group Theory},
   conference={
      title={Geometry, Rigidity and Group Actions},
   },
   book={
      publisher={Univ. of Chicago Press},
      place={Chicago},
   },
   note={to appear},
}

\bib{gaboriau}{article}{
   author={Gaboriau, D.},
   title={Invariants $l^2$ de relations d'\'equivalence et de groupes},
   journal={Publ. Math. Inst. Hautes \'Etudes Sci.},
   number={95},
   date={2002},
}

\bib{gromov}{article}{
   author={Gromov, M.},
   title={Volume and bounded cohomology},
   journal={Inst. Hautes \'Etudes Sci. Publ. Math.},
   number={56},
   date={1982},
   pages={5--99 (1983)},
}
\bib{gromov-invariants}{article}{
   author={Gromov, Mikhail},
   title={Asymptotic invariants of infinite groups},
   conference={
      title={Geometric group theory, Vol.\ 2},
      address={Sussex},
      date={1991},
   },
   book={
      series={London Math. Soc. Lecture Note Ser.},
      volume={182},
      publisher={Cambridge Univ. Press},
   },
   date={1993},
}

\bib{grothendieck}{article}{
   author={Grothendieck, A.},
   title={Produits tensoriels topologiques et espaces nucl\'eaires},
   journal={Mem. Amer. Math. Soc.},
   volume={1955},
   date={1955},
   number={16},
}
\bib{ivanov}{article}{
   author={Ivanov, N. V.},
   title={Foundations of the theory of bounded cohomology},
   language={Russian, with English summary},
   journal={Zap. Nauchn. Sem. Leningrad. Otdel. Mat. Inst. Steklov. (LOMI)},
   volume={143},
   date={1985},
   pages={69--109, 177--178},
}
\bib{loeh-iso}{article}{
   author={L{\"o}h, C.},
   title={Isomorphisms in $l\sp 1$-homology},
   journal={M\"unster J. Math.},
   volume={1},
   date={2008},
   number={1},
   pages={237--265},
}
\bib{lueck}{book}{
   author={L{\"u}ck, W.},
   title={$L^2$-invariants: theory and applications to geometry and
   $K$-theory},
   series={Ergebnisse der Mathematik und ihrer Grenzgebiete},
   volume={44},
   publisher={Springer-Verlag},
   date={2002},
}

\bib{mineyev}{article}{
   author={Mineyev, I.},
   title={Bounded cohomology characterizes hyperbolic groups},
   journal={Q. J. Math.},
   volume={53},
   date={2002},
   number={1},
   pages={59--73},
}
\bib{monod-book}{book}{
   author={Monod, N.},
   title={Continuous bounded cohomology of locally compact groups},
   series={Lecture Notes in Mathematics},
   volume={1758},
   publisher={Springer-Verlag},
   date={2001},
}
\bib{monod+shalom}{article}{
   author={Monod, N.},
   author={Shalom, Y.},
   title={Orbit equivalence rigidity and bounded cohomology},
   journal={Ann. of Math. (2)},
   volume={164},
   date={2006},
   number={3},
}
\bib{Popa:ICM}{article}{
   author={Popa, S.},
   title={Deformation and rigidity for group actions and von Neumann
   algebras},
   conference={
      title={International Congress of Mathematicians. Vol. I},
   },
   book={
      publisher={Eur. Math. Soc., Z\"urich},
   },
   date={2007},
}
\bib{shalom}{article}{
   author={Shalom, Y.},
   title={Rigidity, unitary representations of semisimple groups, and
   fundamental groups of manifolds with rank one transformation group},
   journal={Ann. of Math. (2)},
   volume={152},
   date={2000},
   number={1},
}
\bib{shalom-survey}{article}{
   author={Shalom, Yehuda},
   title={Measurable group theory},
   conference={
      title={European Congress of Mathematics},
   },
   book={
      publisher={Eur. Math. Soc., Z\"urich},
   },
   date={2005},
}

\end{biblist}
\end{bibdiv}
\end{document}